\newtheorem{theorem}{Theorem}[section]
\newtheorem{lemma}[theorem]{Lemma}
\newtheorem{corollary}[theorem]{Corollary}
\newtheorem{remark}[theorem]{Remark}
\theoremstyle{definition}
\numberwithin{equation}{section}
\begin{document}

\centerline{{\Large {\sc
On $L^1$-estimates for probability solutions}}}

\centerline{{\Large{\sc  to Fokker--Planck--Kolmogorov equations}}}

\vskip .1in

\centerline{{\large{\sc Vladimir I. Bogachev,
 Svetlana N. Popova,}}}

\centerline{{\large{\sc
 and Stanislav V. Shaposhnikov}}}

\vskip .2in

\centerline{Abstract}

\vskip .1in

We prove two new results connected with elliptic
Fokker--Planck--Kolmogorov equations with drifts integrable
with respect to solutions. The first result answers negatively
a long-standing question and shows that a density of a probability measure
satisfying the Fokker--Planck--Kolmogorov equation with a drift
integrable with respect to this density can fail to belong to the Sobolev
class~$W^{1,1}(\mathbb{R}^d)$.
There is also a version of this result for densities with respect
to Gaussian measures.
The second new result gives some positive information
about properties of such solutions: the solution density is proved
to belong to certain fractional Sobolev classes.

\vskip .1in
{\sc Keywords:}  Fokker--Planck--Kolmogorov equation, $L^1$-estimate

{\sc MSC:} Primary 35J15; Secondary 35B65

\vskip .1in

This research was supported by the Russian Science Foundation Grant 17-11-01058 (at Lomonosov
Mos\-cow State University).

\section{Introduction}

In the recent years, there has been a growing interest to various $L^1$-estimates for second order
partial differential operators, see, e.g.,
\cite{AN},
\cite{BB},
\cite{BrVS},
\cite{FM},
\cite{HT},
\cite{Maz},
\cite{PRT},
and \cite{VS1}--\cite{VS4},
where additional references can be found.
Their main feature is
that classical $L^p$-estimates for solutions to second order elliptic
equations valid for $p>1$ do not  extend directly to the case $p=1$
(see \cite{Orn}, \cite{DFT}, \cite{LM}, \cite{CFM}, \cite{BB}, and~\cite[Example~7.5]{GiaM}).
Some concrete  examples are mentioned below.
In particular, for the solution $f$ to the Poisson equation
$$
\Delta f=g, \quad f|_{\partial B}=0
$$
on the unit  ball $B$ in $\mathbb{R}^d$ one has
$$
\|f\|_{W^{p,2}(B)}\le C(p,d) \|g\|_{L^p(B)}
$$
with some number $C(p,d)$ provided that $p>1$,
but there is no such estimate for $p=1$ if $d>1$.
Here and throughout we use the symbol $W^{p,k}(\Omega)$
to denote the Sobolev space of functions on a domain $\Omega\subset \mathbb{R}^d$ that belong
to $L^p(\Omega)$ along with their partial derivatives up to order~$k$;
the Sobolev norm $\|f\|_{W^{p,k}}$ is the sum of the $L^p$-norms of the function $f$ and its
partial derivatives up to order~$k$. By $W^{p,k}_{loc}$ we denote the class
of functions $f$ such that $\zeta f\in W^{p,k}(\mathbb{R}^d)$ for all
functions $\zeta$ from the class $C_0^\infty(\mathbb{R}^d)$ of smooth compactly
supported functions.

The failure of an $L^1$-estimate of this kind is connected
with the fact that a solution $f$ for some $g\in L^1(B)$ does not belong
to the second Sobolev class $W^{1,2}(B)$. As a consequence,
a solution to the equation
$$
\Delta u={\rm div}\, v
$$
with a vector field $v$ of class $L^1$
can fail to belong to the first Sobolev class $W^{1,1}(B)$ and in case of smooth solutions
there is no estimate of $\|\nabla u\|_{L^{1}}$ through $\|v\|_{L^1}$. For example,
one can consider $u=\partial_{x_1}f$ and $v=(\partial_{x_1}g,0,\ldots)$ for $f$ and $g$
satisfying the first equation.

Questions of this type arise also for solutions to
Fokker--Planck--Kolmogorov equations on the whole space, which is the subject of this paper
and which has not been studied so far. A bounded Borel measure $\mu$ on $\mathbb{R}^d$ is said
to satisfy the Fokker--Planck--Kolmogorov equation
\begin{equation}\label{ek1}
L_b^{*}\mu=0
\end{equation}
with a Borel vector field $b$ locally integrable with respect to $\mu$ if
for the operator
$$
L_b \varphi=\Delta \varphi +\langle b,\nabla\varphi\rangle
$$
we have the identity
$$
\int_{\mathbb{R}^d} L_b\varphi(x)\, \mu(dx)=0\quad \forall\, \varphi\in C_0^\infty(\mathbb{R}^d).
$$
It is known (see \cite{BKR01} or~\cite{BKRS})
that in this case the measure $\mu$ has a density $\varrho$ with respect to Lebesgue measure and
$$
\Delta \varrho -{\rm div}\, (\varrho b)=0
$$
in the sense of the integral identity
\begin{equation}\label{id1}
\int_{\mathbb{R}^d} [\Delta \varphi +\langle b, \nabla \varphi\rangle ] \varrho\, dx=0
\quad \forall \varphi\in C_0^\infty(\mathbb{R}^d).
\end{equation}
Moreover,
if $|b|$ is locally integrable to some power $p>d$
with respect  to Lebesgue measure or with respect to~$\mu$, then $\varrho\in W^{p,1}_{loc}(\mathbb{R}^d)$
(although $\varrho$ can fail to be in the second Sobolev class $W^{p,2}_{loc}(\mathbb{R}^d)$
unlike the case of non-divergence form equations).
This is not true for $p<d$, but in case of the global integrability
the following fact holds for $p=2$
(see \cite{BKRS}, \cite{BR95}, and~\cite{BKR96}).
Suppose that $\mu=\varrho\, dx$ is a probability measure on $\mathbb{R}^d$ satisfying
equation~(\ref{ek1}), where, in addition,
$$
\int_{\mathbb{R}^d} |b|^2 \, d\mu=
\int_{\mathbb{R}^d} |b|^2 \varrho\, dx <\infty.
$$
Then $\varrho\in W^{1,1}_{loc}(\mathbb{R}^d)$ and the logarithmic gradient $\nabla\varrho/\varrho$ belongs to
the weighted space $L^2(\varrho\, dx)$
and one has a dimension-free estimate
\begin{equation}\label{estL2}
\bigl\| |\nabla\varrho/\varrho|\bigr\|_{L^2(\mu)}^2=
\int_{\mathbb{R}^d} \Bigl|\frac{\nabla\varrho}{\varrho}\Bigr|^2 \varrho\, dx\le
\int_{\mathbb{R}^d} |b|^2 \varrho\, dx
=\bigl\| |b|\bigr\|_{L^2(\mu)}^2.
\end{equation}
To be more precise, $\nabla\varrho/\varrho$ is the orthogonal projection of $b$
onto the closure of gradients of smooth compactly supported functions
in the Hilbert space $L^2(\varrho\, dx,\mathbb{R}^d)$ of $\mathbb{R}^d$-valued mappings.
Note that a local version of this result fails: see \cite[Example~1.6.10]{BKRS}.
It is obvious that it is not valid  for signed solutions.

There are also some sufficient conditions for membership of
$|\nabla\varrho/\varrho|$ in $L^p(\varrho\, dx)$ with $p>2$.
However, such conditions are not of the same
form as in case $p=2$ and require additional assumptions such as
a certain rate of convergence of $\langle b(x),x\rangle$ to~$-\infty$ as~$|x|\to+\infty$
(see \cite{MPR}, \cite{BKR06}, \cite{BKR09},
and~\cite{BKRS}).

It is still unknown whether
there are $L^p$-analogs of the above estimate for $p\not=2$. One goal of this paper is to show
that there is no such estimate for $p=1$. We actually show that
there is a sequence of smooth probability densities $\varrho_n$ satisfying the equations
$$
\Delta \varrho_n -{\rm div}\, (\varrho_n b_n)=0
$$
on $\mathbb{R}^2$ with smooth mappings $b_n$ such that
$$
\int_{\mathbb{R}^2} |\nabla\varrho_n|\, dx\ge n\int_{\mathbb{R}^2} |b_n|\, \varrho_n\, dx.
$$
We also show that there is a smooth probability solution $\varrho$
to the equation with smooth $b$ such that
 $|b|\varrho\in L^1(\mathbb{R}^2)$  and $|\nabla\varrho|$ is not
in $L^1(\mathbb{R}^2)$, so that $\varrho\not\in W^{1,1}(\mathbb{R}^2)$.
We emphasize that the difficulty concerned probability solutions, there was no problem with signed
solutions.

The assumption that $|b|$ is integrable with the weight $\varrho$ rather than
with respect to Lebesgue measure is quite natural. For example, any probability measure
with a density $\varrho\in W^{1,1}_{loc}(\mathbb{R}^d)$ satisfies
the equation $L_b^{*}\mu=0$ with $b=\nabla\varrho/\varrho$, where
we let $\nabla\varrho/\varrho:=0$ on the set $\{\varrho=0\}$. Obviously,
such $b$ can be very singular with respect to Lebesgue measure, but with
weight $\varrho$ it is locally integrable, and if
$\varrho\in W^{1,1}(\mathbb{R}^d)$, then $|b|\varrho\in L^1(\mathbb{R}^d)$.

The situation is similar with $L^1$-estimates with respect to Gaussian measures.
It is known (see \cite{BR95} or \cite{BKR96})
that if a Borel probability measure $\mu$ on $\mathbb{R}^d$
 satisfies equation (\ref{ek1})
with a drift
$$
b(x)=-x+v(x),
$$
where $x_i, |v|\in L^2(\mu)$
then $\mu$ has a density $f$ with respect to the standard Gaussian
measure $\gamma$ on $\mathbb{R}^d$, this density is in $W^{1,1}_{loc}(\mathbb{R}^d)$,
and the mapping $\nabla f/f$ is the orthogonal projection of $v$ to the closure
of the gradients of smooth compactly supported functions taken in
the Hilbert space $L^2(\mu,\mathbb{R}^d)$ of $\mathbb{R}^d$-valued mappings.
Hence
$$
\int_{\mathbb{R}^d}
 \Bigl|\frac{\nabla f}{f}\Bigr|^2\, d\mu
=\int_{\mathbb{R}^d}
 \frac{|\nabla f|^2}{f}\, d\gamma\le \int_{\mathbb{R}^d} |v|^2 \, d\mu.
$$
Our next result says that there is no such estimate for the $L^1$-norm of $|\nabla f|$, namely,
 there is a sequence of smooth vector fields $v_n$ on $\mathbb{R}^2$
such that
$$
\int_{\mathbb{R}^2} |\nabla f_n|\, d\gamma\ge n\int_{\mathbb{R}^2} |v_n|f_n\, d\gamma.
$$

It is instructive to consider the following formal manipulations.
For  locally Sobolev  $\varrho$ one can write (\ref{id1}) as
\begin{equation}\label{id2}
\int_{\mathbb{R}^d} \langle \nabla\varrho, \nabla \varphi\rangle \, dx=
\int_{\mathbb{R}^d} \langle \varrho b, \nabla \varphi\rangle \, dx
\quad \forall \varphi\in C_0^\infty(\mathbb{R}^d).
\end{equation}
If we substitute $\varphi=\log\varrho$, then we obtain
$$
\int_{\mathbb{R}^d} \frac{|\nabla \varrho|^2}{\varrho}\, dx=
\int_{\mathbb{R}^d} \langle b, \nabla \varrho\rangle \, dx
\le
\biggl(\int_{\mathbb{R}^d} \frac{|\nabla \varrho|^2}{\varrho}\, dx\biggr)^{1/2}
\biggl(\int_{\mathbb{R}^d} |b|^2\varrho\, dx\biggr)^{1/2},
$$
which yields (\ref{estL2}). It turns out that this manipulation can be
justified in case of $\mathbb{R}^d$ and in case of connected Riemannian manifolds
with certain curvature conditions (see~\cite{BRW}),
but not in case of arbitrary
connected manifolds. The latter is indeed impossible even in case $b=0$
because  that would mean the absence of nonzero integrable nonnegative
harmonic functions while examples of such functions are
known from \cite{Chung}, \cite{Grig}, and~\cite{LiSch}.
Let now  $\varphi$ be such that $\nabla\varphi=\nabla\varrho/|\nabla \varrho|$.
This would give the bound
$$
\int_{\mathbb{R}^d} |\nabla\varrho|\, dx\le \int_{\mathbb{R}^d} |b|\varrho\, dx,
$$
which, as we show below, is false even on $\mathbb{R}^2$.
Certainly, both substitutions are illegal, but the first one leads to a
correct conclusion. It would be interesting to find conditions under which
the second one can be also justified.

Our construction is based on a thorough study and certain modification of
the known old result of Ornstein \cite{Orn}, who
showed that there are smooth functions $g_n$ with support in a square in $\mathbb{R}^2$ such that
$$
\int_{\mathbb{R}^2} |\partial_x\partial_y g_n(x,y)|\, dx\, dy
\ge n \int_{\mathbb{R}^2} \Bigl[|\partial_x^2g_n(x,y)|+|\partial_y^2g_n(x,y)|\Bigr]\, dx\, dy.
$$
This result shows that the $L^1$-norm of the mixed derivative is not controlled
by the $L^1$-norm of the Laplacian. In particular, the Sobolev norm in the second class $W^{1,2}$
is not controlled by the $L^1$-norm of the Laplacian. The latter effect is much easier seen
by example of radial functions, as noted, e.g.,
in \cite{DFT} and~\cite{LM}. One can show that the function $f$ that is $\log\log r$ near zero
in $\mathbb{R}^2$ is not in the class $W^{1,2}$, but $\Delta f$ in the sense of distributions
is given by the usual pointwise
expression for the Laplacian outside of the origin and is integrable.
However, for our purposes this elementary example (actually, any radial function) is not enough,
as explained in~Remark~\ref{rem2.2}.
This is why
 we need a modification of Ornstein's result with a Lipschitz function and some
 other bound (see~(\ref{t2.1})). Moreover,  taking into account that in the
original example in \cite{Orn} some important technical
details of justification are omitted, we have to reproduce the whole example from
that paper with all details and  verification of some additional properties.
This is done in the last section. Moreover, we explain there how the desired
modification can be derived from Ornstein's result (but this reasoning does not provide
the missing details in Ornstein's construction).

Our positive result presented in Section~4 says that the Sobolev class $W^{1,1}$
to which solutions can fail to belong is actually the border line and that the integrability
of $|b|$ with respect to the measure $\varrho\, dx$ yields that $\varrho$ belongs
to fractional Sobolev classes of order of differentiability as close to $1$ as we wish.

\section{A modification of Ornstein's example}

Here we present  a modification of Ornstein's result that differs from his
original result by extra terms in the inequality.
These extra terms are needed in the case of the Fokker--Planck--Kolmogorov equation.
Its justification is postponed until the last section, since
it is rather involved technically, although the construction follows Ornstein's method.
In this and the last sections vectors in $\mathbb{R}^2$ are denoted by $(x,y)$
unlike the rest of the paper where single letters like $x$ are use
to denote vectors.

\begin{theorem}\label{t2.1}
For each $\delta \in (0, 1)$ there is a function
 $g_\delta \in C^{\infty}_0([-1, 1]^2])$ such that
\begin{equation}\label{e2.1}
\|\partial_x \partial_y g_\delta\|_1
\ge \frac{1}{\delta} \Bigl(\|\partial_x^2 g_\delta\|_1
+ \|\partial_y^2 g_\delta\|_1 +
\|\partial_x g_\delta\|_{\infty}
+ \|\partial_y g_\delta\|_{\infty} \Bigr).
\end{equation}
In addition, there is a Lipschitz {\rm(}even of class $C^1${\rm)} function $f$ that
vanishes outside of  $[-1, 1]^2$ such that there exist repeated Sobolev derivatives
with
$$
\partial_x^2 f, \ \partial_y^2 f \in L^1,
\ \hbox{ but } \
\partial_x \partial_y f \notin L^1,
$$
where
$\partial_x\partial_y$ is taken in the sense of distributions.
\end{theorem}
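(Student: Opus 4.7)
The argument naturally splits: first derive the modified inequality from Ornstein's classical theorem, then assemble the $C^1$ counterexample out of it.

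For the first assertion, Ornstein's classical result furnishes, for every $n$, a function $u_n \in C_0^\infty([-1,1]^2)$ with $\|\partial_x \partial_y u_n\|_1 \ge n(\|\partial_x^2 u_n\|_1 + \|\partial_y^2 u_n\|_1)$. The four quantities on the right of (\ref{e2.1}) are positively homogeneous of the same degree in $u_n$, so it suffices to show that the scale-invariant ratio
$$R_n := \frac{\|\partial_x^2 u_n\|_1 + \|\partial_y^2 u_n\|_1 + \|\partial_x u_n\|_\infty + \|\partial_y u_n\|_\infty}{\|\partial_x \partial_y u_n\|_1}$$
tends to zero. Ornstein's construction is self-similar: $u_n$ is a superposition of many spatially separated smooth building blocks at $N = N(n)$ scales, so the pointwise size of $\nabla u_n$ is dictated by the contribution of just a few overlapping blocks, whereas $\|\partial_x\partial_y u_n\|_1$ accumulates contributions from all $N$ blocks. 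Thus, normalising $\|\partial_x\partial_y u_n\|_1 = 1$ makes $\|\nabla u_n\|_\infty$ a constant divided by $N$, tending to zero along with the second-derivative part already treated by Ornstein. Choosing $n$ with $R_n < \delta$ yields $g_\delta := u_n$. The actual labour is in reinspecting the recursion to certify this $L^\infty$-decay; this is precisely why the paper defers the construction to the last section.

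For the second assertion, I apply (\ref{e2.1}) with $\delta = 2^{-k}$, normalise so that $\|\partial_x \partial_y g_{2^{-k}}\|_1 = 1$, and denote the result by $u_k$. Then $\|\partial_x^2 u_k\|_1 + \|\partial_y^2 u_k\|_1 \le 2^{-k}$ and $\|\nabla u_k\|_\infty \le 2^{-k}$, whence $\|u_k\|_\infty \le 2 \cdot 2^{-k}$ by integration from the boundary of $[-1,1]^2$. Pick pairwise disjoint closed squares $Q_k \subset [-1,1]^2$ of side $2r_k = 2/k$ centred at points $(x_k, y_k)$ converging to some $p^* \in [-1,1]^2$, and set
$$h_k(x,y) := a_k\, u_k\!\left(\frac{x-x_k}{r_k},\, \frac{y-y_k}{r_k}\right),$$
so that $h_k \in C_0^\infty(Q_k)$. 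A change of variables yields
$$\|\partial_x \partial_y h_k\|_1 = a_k, \quad \|\partial_x^2 h_k\|_1 + \|\partial_y^2 h_k\|_1 \le a_k 2^{-k}, \quad \|\nabla h_k\|_\infty \le (a_k/r_k)\, 2^{-k}.$$
Taking $a_k = k^{-1/2}$ gives $\sum_k a_k = \infty$, $\sum_k a_k 2^{-k} < \infty$, and $\|\nabla h_k\|_\infty \le k^{1/2} 2^{-k} \to 0$. The function $f := \sum_k h_k$ is thus $C^1$ on $\mathbb{R}^2$ (with $\nabla f$ extended continuously by $0$ at $p^*$ and vanishing outside $[-1,1]^2$); $\partial_x^2 f$ and $\partial_y^2 f$ are in $L^1$ by absolute $L^1$-convergence of the series of second derivatives, while $\partial_x \partial_y f \notin L^1$ because its distributional $L^1$-mass on $\bigcup_k Q_k$ equals $\sum_k a_k = \infty$.

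The main obstacle is the first assertion: the uniform $L^\infty$-bound on $\nabla u_n$ is not a black-box consequence of Ornstein's published theorem, so one must revisit his self-similar construction and estimate, scale by scale, the contribution of new blocks to $\|\nabla u_n\|_\infty$ relative to $\|\partial_x \partial_y u_n\|_1$. Once (\ref{e2.1}) is in hand, the second assertion follows from the straightforward telescoping above.
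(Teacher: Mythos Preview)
Your plan for the first assertion is on the right track---both you and the paper agree that Ornstein's recursion must be revisited to certify the extra gradient bound---but your heuristic about ``spatially separated blocks at $N$ scales'' is not how the construction actually works. In the paper's version (following Ornstein), $p_{n+1}$ is obtained by modifying $p_n$ on thin horizontal strips, not by adding disjointly supported pieces; the bound $|p_n^x|\le 1$ comes from a monotonicity property of $y\mapsto p_n^x(x,y)$ (so its range is sandwiched between the extreme values of $p_1^x$), and $|p_n^y|\le 2\delta$ from a geometric series over the levels. The paper also records, separately, a packing lemma that \emph{does} reduce the $L^\infty$ terms to Ornstein's black-box inequality: one packs $M\sim N^2$ rescaled copies of a single Ornstein function into the unit ball, so the $L^1$ second-derivative norms scale by $M$ while $\|\nabla\cdot\|_\infty$ only scales by $N$, and dividing by $M$ kills the extra terms. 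That is a clean scaling argument, but again different from your picture.

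For the second assertion you take a genuinely different route from the paper. You build $f$ explicitly as a series $\sum_k h_k$ of rescaled $g_{2^{-k}}$'s supported in disjoint shrinking squares accumulating at a point; the paper instead invokes the closed graph theorem on the Banach space $E$ of $C^1$ (or Lipschitz) functions vanishing on $\partial[-1,1]^2$ with $\partial_x^2 f,\partial_y^2 f\in L^1$, arguing that if every such $f$ had $\partial_x\partial_y f\in L^1$ then $f\mapsto\partial_x\partial_y f$ would be a bounded operator $E\to L^1$, contradicting the first assertion. Your construction is correct, elementary, and produces a concrete $f$; the paper's is a one-line soft argument once (\ref{e2.1}) is in hand.
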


\begin{corollary}
There exist a probability density $\varrho\in C^\infty (\mathbb{R}^2)$
and a $C^\infty$-mapping $v\colon\, \mathbb{R}^2\to\mathbb{R}^2$
 such that $|v|\in L^1(\mathbb{R}^2)$ and
 $\Delta\varrho -{\rm div}\, v=0$, but $|\nabla \varrho|$
 does not belong to~$L^1(\mathbb{R}^2)$.
\end{corollary}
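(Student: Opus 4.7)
The plan is to produce $\varrho$ as $\varrho_0 + \partial_x G$, where $G$ is a suitably rescaled locally finite sum of translated copies of the functions supplied by Theorem~\ref{t2.1}. Let $g_n := g_{1/n}$ and pick positive constants $c_n$ with $c_n\|\partial_x\partial_y g_n\|_1 = 1/n$; inequality \eqref{e2.1} immediately gives
$$
c_n\|\partial_x^2 g_n\|_1,\ c_n\|\partial_y^2 g_n\|_1,\ c_n\|\partial_x g_n\|_\infty,\ c_n\|\partial_y g_n\|_\infty\ \le\ \frac{1}{n^2}.
$$
Place a translated copy on the disjoint squares $Q_n := [3n-1,3n+1]\times[-1,1]$ by setting $h_n(x,y) := c_n g_n(x-3n, y)$, and form $G := \sum_n h_n$. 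Since only finitely many $h_n$ are nonzero near any given point, $G\in C^\infty(\mathbb R^2)$. By disjointness of supports,
$$
\|\partial_x\partial_y G\|_1 = \sum_n \tfrac{1}{n} = \infty,\qquad \|\Delta G\|_1\le \sum_n \tfrac{2}{n^2}<\infty,\qquad |\partial_x G|\le \tfrac{1}{n^2}\ \text{on}\ Q_n.
$$

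Next I would build a smooth, strictly positive probability density $\varrho_0$ on $\mathbb R^2$ satisfying $\varrho_0\ge 2/n^2$ on all of $Q_n$ for every~$n$. Take a Gaussian-type smooth positive background density $\varrho_{00}$ and add smooth positive bumps $\psi_n\in C_0^\infty$ supported in a slightly enlarged neighborhood $Q_n'\supset Q_n$ of bounded area, with $\psi_n\equiv 2/n^2$ on $Q_n$. The additional mass is $\sum_n \int \psi_n \le \sum_n C/n^2 <\infty$, and $\sum_n\|\nabla\psi_n\|_1$ is also finite because each bump has height $2/n^2$ on a region of bounded diameter. Normalize to obtain $\varrho_0$ with $\int\varrho_0 = 1$, $\varrho_0\in C^\infty$, $\varrho_0>0$, and $\|\nabla\varrho_0\|_1<\infty$.

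Finally, set
$$
\varrho := \varrho_0 + \partial_x G,\qquad v := \nabla\varrho_0 + (\Delta G,0).
$$
Smoothness of $\varrho$ and $v$ is immediate. Nonnegativity: on $Q_n$, $\varrho\ge 2/n^2 - 1/n^2 > 0$; off $\bigcup_n Q_n$, $\partial_x G=0$ and $\varrho=\varrho_0>0$. Mass: each $\partial_x h_n$ has compact support so $\int\partial_x h_n = 0$, hence $\int\varrho = \int\varrho_0 = 1$. The equation check is direct:
$$
\operatorname{div} v = \Delta\varrho_0 + \partial_x\Delta G = \Delta\varrho_0 + \Delta(\partial_x G) = \Delta\varrho.
$$
Moreover $|v|\le|\nabla\varrho_0| + |\Delta G|\in L^1$, so $v$ is the desired $L^1$ field. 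Finally, by the reverse triangle inequality
$$
\|\partial_y\varrho\|_1 \ge \|\partial_x\partial_y G\|_1 - \|\partial_y\varrho_0\|_1 = +\infty,
$$
so $|\nabla\varrho|\notin L^1(\mathbb R^2)$.

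The main obstacle is guaranteeing pointwise positivity of $\varrho$ while keeping $\varrho_0$ a probability density on all of $\mathbb R^2$: a priori the ``bad'' part $\partial_x G$ has to be dominated by $\varrho_0$ on an infinite union $\bigcup Q_n$ of squares marching off to infinity. The crucial point that makes this possible is the extra $L^\infty$-term in inequality~\eqref{e2.1}, which forces $\|\partial_x g_n\|_\infty$ to be small compared with $\|\partial_x\partial_y g_n\|_1$, so that $|\partial_x G|$ is as small as $1/n^2$ on $Q_n$ and the total mass needed to dominate it is summable. This is precisely why the modification of Ornstein's inequality stated in Theorem~\ref{t2.1} (with its $L^\infty$-terms) is needed rather than the plain Ornstein bound.
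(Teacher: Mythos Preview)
Your argument follows essentially the same route as the paper's: take the Theorem~\ref{t2.1} functions, pass to a first derivative, exploit the $L^\infty$ terms in~\eqref{e2.1} to dominate that derivative by a nonnegative compensator, translate to disjoint squares, and sum. The paper adds a compactly supported bump $u_n$ to each piece $f_n=\partial_y g_n$ before summing, whereas you add a single global positive background $\varrho_0$ after summing; this is a cosmetic difference (yours yields a strictly positive density, the paper's is merely nonnegative).

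One small arithmetic slip: a \emph{probability} density $\varrho_0$ with $\varrho_0\ge 2/n^2$ on every $Q_n$ cannot exist, since
\[
\sum_{n\ge1} |Q_n|\cdot \frac{2}{n^2}=\sum_{n\ge1}\frac{8}{n^2}=\frac{4\pi^2}{3}>1.
\]
The fix is trivial---either replace $2/n^2$ by $c/n^2$ for a sufficiently small constant $c>0$ and rescale $G$ by $c/2$, or (cleanest) drop the requirement that $\varrho_0$ itself be a probability measure and postpone normalization to the final $\varrho=\varrho_0+\partial_x G$, which still has finite total mass. Everything else in your argument is correct.
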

\begin{proof}
It is clear from the theorem that
for every $n$ one can find a function $g_n\in C^\infty_0([-1,1]^2)$ such that
$$
\|\partial_x\partial_y g_n\|_1=1, \
\|\partial_x^2 g_n\|_1+\|\partial_y^2 g_n\|_1\le 1/n, \
\|\partial_x g_n\|_\infty\le 1/n, \
\|\partial_y g_n\|_\infty\le 1/n.
$$
For the function $f_n=\partial_y g_n$ we have
$\|\partial_x f_n\|_1=1$, $\|f_n\|_\infty\le 1/n$, and
$\Delta f_n={\rm div}\, v_n$, where $v_n=(0, \Delta g_n)$,
so $\|v_n\|_1\le 1/n$. The function $f_n$  need not be nonnegative,
but $|f_n|\le 1/n$. We now consider $f_n$ on the square $[-2,2]^2$ and
find a bump function $u_n\in C_0^\infty([-2,2]^2)$ such that $0\le u_n\le 1/n$,
$u_n=1/n$ on  $[-1,1]^2$ and $|\nabla u_n|\le 2/n$. The function
$w_n=f_n+u_n\in C_0^\infty([-2,2]^2)$ is nonnegative, bounded by $2/n$ and
$\Delta w_n= {\rm div}\, (v_n+\nabla u_n)$, where $\|v_n+\nabla u_n\|_1\le 3/n$.

By using shifts we can find such functions $w_n$
with  supports in disjoint squares. Then the function
$w=\sum_{n=1}^\infty n^{-1}w_n$ is infinitely differentiable, nonnegative, and
$\Delta w={\rm div}\, v$, where $v=\sum_{n=1}^\infty n^{-1}(v_n+\nabla u_n)$,
$\|v\|_1\le 3\sum_{n=1}^\infty n^{-2}<\infty$, but
$\partial_x w$ does not belong to~$L^1(\mathbb{R}^2)$.
Finally, $\|w\|_1\le 2 \sum_{n=1}^\infty n^{-2}$, so multiplying $w$ by a constant
we obtain a probability density.
\end{proof}

\begin{remark}\label{rem2.1}
\rm
In the theorem and in the corollary, one can take the corresponding functions
such that the support is $[-1,1]^2$ or the whole plane, respectively. It suffices
to add to the constructed solution a smooth nonnegative function with the desired
support.
\end{remark}

\begin{remark}\label{rem2.2}
\rm
Let us explain why we could not  use a much simpler example of
the function $f(x,y)=\log\log r$ on $\mathbb{R}^2$ not belonging
to the second Sobolev class on the unit disc and satisfying the equation $\Delta f=g$
with $g$ integrable near the origin and also leading to the equation
$\Delta \partial_x f={\rm div}\, (g,0)$ whose solution is not in~$W^{1,1}$.
The point is that we need a probability solution
for the latter equation, but
if $f(x)=V(r)$ is an integrable radial function on the unit disc with integrable $V'(r)$ near zero on the real line
such that $\Delta f$ is integrable near zero in the plane, then,
recalling that $\Delta f$ in polar coordinates is
$\Delta f=V''(r)+r^{-1}V'(r)$, we see that $V''(r)$ must be integrable near zero in the plane
(i.e., $V''(r)r$ is integrable near zero on the real line).
Hence all second order partial derivatives of $f$ are integrable  as well, so $f$ is in the second Sobolev class.
\end{remark}

\section{The Fokker--Planck--Kolmogorov equation and the Gaussian case}

We now apply the example described above to constructing some examples
with the Fokker--Planck-Kolmogorov equation. Let us explain at once why
such examples are impossible on the real line. The point is that in the one-dimensional
case we have the equation $\varrho''-(\varrho b)'=0$, hence $\varrho'- \varrho b=C$
for some constant~$C$. It follows that $\varrho$ has a locally absolutely continuous
version. Since $\varrho b$ is integrable and $\varrho$ cannot be
separated from zero as $|x|\to\infty$, the constant $C$ must be zero, hence
$\varrho'$ is integrable as well.

\begin{theorem}\label{t3.1}
There exist
a continuous probability density $\varrho$ with compact
support and a Borel vector field $b$ with compact support on $\mathbb{R}^2$
such that  $|b|\varrho\in L^1(\mathbb{R}^2)$ and $\Delta \varrho-{\rm div}\, (\varrho b)=0$,
but $\varrho$ does not belong to the Sobolev class~$W^{1,1}_{loc}$.

There exist also a probability density $\varrho\in C^\infty(\mathbb{R}^2)$ and
a $C^\infty$-vector field $b$ such that
$\Delta \varrho-{\rm div}\, (\varrho b)=0$ and $|b|\varrho\in L^1(\mathbb{R}^2)$,
but $|\nabla\varrho|$ does not belong to~$L^1(\mathbb{R}^2)$.
\end{theorem}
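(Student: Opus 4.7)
The plan is to derive both assertions by ad hoc modifications of the density that keep the $L^1$ structure of the drift intact while making the ratio $b=V/\varrho$ well defined.

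\emph{First assertion.} I would start from the Lipschitz (in fact $C^1$) function $f$ furnished by Theorem~\ref{t2.1}, supported in $[-1,1]^2$, with $\partial_x^2 f,\,\partial_y^2 f\in L^1$ but $\partial_x\partial_y f\notin L^1$. Setting $\phi:=\partial_y f$ gives a continuous, compactly supported function, and commuting distributional partial derivatives yields
\[
\Delta\phi=\partial_y(\partial_x^2 f+\partial_y^2 f)=\mathrm{div}\,(0,\Delta f),
\]
with $(0,\Delta f)\in L^1$. Next I would choose a nonnegative $\psi\in C_0^\infty(\mathbb{R}^2)$ with $\psi>\|\phi\|_\infty$ on $[-1,1]^2$ and $\psi>0$ on an open set containing $[-1,1]^2\cup\mathrm{supp}(\nabla\psi)$, and put $\varrho:=c(\phi+\psi)$ with $c$ normalizing to mass one. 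Then $\varrho$ is a continuous, compactly supported probability density, and $\Delta\varrho=\mathrm{div}\,V$ for the compactly supported field $V:=c(0,\Delta f)+c\nabla\psi\in L^1$. Defining $b:=V/\varrho$ on $\{\varrho>0\}$ and $b:=0$ elsewhere yields a Borel vector field of compact support with $|b|\varrho=|V|\in L^1$; by construction $\mathrm{supp}(V)$ sits inside $\{\varrho>0\}$, so $\varrho b=V$ and the equation $\Delta\varrho-\mathrm{div}\,(\varrho b)=0$ holds in the sense of distributions. Finally, $\varrho\notin W^{1,1}_{\mathrm{loc}}$, because $\partial_x\varrho\in L^1_{\mathrm{loc}}$ combined with the smoothness of $\psi$ would force $\partial_x\partial_y f\in L^1_{\mathrm{loc}}$, hence globally in $L^1$ (by compact support of $f$), contradicting Theorem~\ref{t2.1}.

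\emph{Second assertion.} I would apply the Corollary above to obtain $w\in C^\infty(\mathbb{R}^2)$, nonnegative with compact support and $\int w=1$, satisfying $\Delta w=\mathrm{div}\,v$ with a smooth $v$ such that $|v|\in L^1$, yet $|\nabla w|\notin L^1$. To repair positivity so that $b$ can be smooth, add the standard Gaussian $h(x):=\pi^{-1}e^{-|x|^2}$, a $C^\infty$ probability density with $\nabla h=-2xh$ and $2|x|h\in L^1$. Setting $\varrho:=\tfrac12(w+h)$ produces a strictly positive $C^\infty$ probability density satisfying $\Delta\varrho=\mathrm{div}\,V$ for $V:=\tfrac12(v-2xh)\in L^1$. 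Then $b:=V/\varrho$ is $C^\infty$ as a ratio of smooth functions with positive denominator, $|b|\varrho=|V|\in L^1$, and the Fokker--Planck--Kolmogorov equation holds. If $|\nabla\varrho|\in L^1$ then $|\nabla w|=|2\nabla\varrho-\nabla h|$ would be integrable, contradicting the corollary.

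\emph{Main obstacle.} The only delicate point is the identity $\varrho b=V$ in the first part, since $b$ is defined via a quotient only where $\varrho>0$; this forces us to ensure that $V$ vanishes on $\{\varrho=0\}$. It is resolved by choosing $\psi$ so that $\{\varrho>0\}$ contains the supports of both $\Delta f$ and $\nabla\psi$. Beyond this bookkeeping, the argument reduces cleanly to Theorem~\ref{t2.1} and its Corollary without further analytic work.
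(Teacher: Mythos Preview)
Your argument is correct in both parts, with two harmless imprecisions. In the first assertion, the condition ``$\psi>0$ on an open set containing $\mathrm{supp}(\nabla\psi)$'' cannot be met by any nonzero nonnegative $\psi\in C_0^\infty$ (the boundary of $\{\psi>0\}$ always lies in $\mathrm{supp}(\nabla\psi)$), but you do not need it: since $\psi\ge 0$ is smooth, $\nabla\psi$ vanishes pointwise wherever $\psi=0$, so $V=0$ on $\{\varrho=0\}$ automatically and $\varrho b=V$ holds everywhere. In the second assertion, the density $w$ produced by the Corollary does not have compact support (it lives on infinitely many disjoint squares), but you never use that property.

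For the first assertion your route coincides with the paper's: it too takes a continuous compactly supported $w\ge 0$ with $\Delta w=\mathrm{div}\,v$, $|v|\in L^1$, $w\notin W^{1,1}$ and simply sets $b=v/w$; you make the passage from Theorem~\ref{t2.1} to such a $w$ explicit, which the paper leaves to the reader. For the second assertion you take a genuinely different and shorter path. The paper achieves strict positivity of the smooth density by tiling the plane with \emph{overlapping} unit squares $B_n$ and placing on each a translated building block $f_n$ with support exactly~$B_n$, arranging the norms so that $\sum\|\nabla f_n\|_{L^1(D_n)}$ diverges on the smaller concentric squares $D_n$ (where no cancellation from overlaps can occur). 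You instead keep the Corollary's density $w$ unchanged and add a Gaussian $h$; the sum $\varrho=\tfrac12(w+h)$ is strictly positive, so $b=V/\varrho$ is smooth with no further work. Your construction is simpler; the paper's has the minor advantage that the resulting smooth drift is locally bounded (being a sum of compactly supported pieces divided by a positive function), whereas yours behaves like $-2x$ at infinity.
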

\begin{proof}
We know that there is a continuous probability density $w$ with compact
support in $\mathbb{R}^2$ satisfying the equation
$\Delta w={\rm div}\, v$ with a Borel vector field $v$ with compact support such that
$|v|\in L^1(\mathbb{R}^2)$ and $w$ does not belong to $W^{1,1}(\mathbb{R}^2)$.
We now write the same equation as
$$
\Delta w={\rm div}\, (wb), \quad b:=\frac{v}{w},
$$
where on the set $\{w=0\}$ we define $b$ by the zero value.
Obviously, $|b|\in L^1(w\, dx)$,
although now we can loose the Lebesgue integrability of~$b$, of course.

We now construct an example of a
 smooth probability density $\varrho$ satisfying the equation $L_b^*(\varrho\, dx)=0$
 with smooth $b$, but still not belonging to $W^{1,1}(\mathbb{R}^2)$.
 To this end, we return to the examples of the previous section and using also
 Remark~\ref{rem2.1} find smooth nonnegative functions $g_n$
 with support exactly $[0,1]^2$ and smooth vector fields $v_n$
 with support in $[0,1]^2$ such that
 $\Delta g_n={\rm div}\, v_n$,
 $\|\nabla g_n\|_1=n^{-1}$,
 $\|g_n\|_\infty\le n^{-2}$, $\|v_n\|_1\le n^{-2}$.
 It is obvious from our construction that we can ensure
 the bound $\|\nabla g_n\|_{L^1(D)}> (2n)^{-1}$ on the twice
 smaller square $D$ with the same center.
 Next we cover the whole plane by squares of unit length with vertices
 at the integer points and slightly increase the obtained squares
 in order to produce overlapping squares $B_n$ such that every point is
 contained in the interior of some of these larger squares.
 Now each $B_n$ has intersections with eight other squares.
 Translating our functions $g_n$ we can construct
smooth nonnegative functions $f_n$ with supports exactly $B_n$
and vector fields $u_n$ of class $C_0^\infty(B_n)$ such that
$\Delta f_n={\rm div}\, u_n$,
$\|u_n\|_{L^1(D_n)}> (2n)^{-1}$, where $D_n$ is the square of edge length $1/2$
with the same center as~$B_n$,
$\|f_n\|_\infty\le n^{-2}$, and~$\|u_n\|_1\le 2n^{-2}$.

 The purpose of making $B_n$ overlapping is that now the function
 $f=\sum_{n=1}^\infty f_n$ is positive (simple translations of $g_n$
 would give  a function vanishing on the edges). Clearly, this function
 is infinitely differentiable and satisfies the equation $\Delta f={\rm div}\, u$
 with $u=\sum_{n=1}^\infty u_n$, where $|u|_1\le \sum_{n=1}^\infty \|u_n\|_1
 \le 2\sum_{n=1}^\infty n^{-2}$. It is also obvious that $|\nabla f|$
 is not integrable over the plane, since already the integral over the union of~$D_n$
 diverges. Taking $b=v/f$ as above, we obtain a smooth vector field with
 $|b|f\in  L^1(\mathbb{R}^2)$ such that $\Delta f={\rm div}\, (fb)$.
It remains to normalize $f$ to obtain a probability density.
  \end{proof}

We now consider the connection between the two cases mentioned above, where
densities  are taken with respect to Lebesgue measure and with respect to the standard Gaussian
measure~$\gamma$ on the plane with density $\varrho_2(x)=(2\pi)^{-1}\exp(-|x|^2/2)$.
Suppose that a probability measure $\mu$ with a density
 $\varrho$ satisfies the equation $L_b^*\mu=0$ with a drift~$b$. Let us set
$$
f(x)=\varrho(x)/\varrho_2(x).
$$
Certainly, the same measure $\mu=f\cdot\gamma$ satisfies
the equation with the same drift written as
$-x+v(x)$, where $v(x):=b(x)+x$.
Therefore, once we use the aforementioned field $b$ that coincides with $-x$
outside of a compact set, we obtain $v$ with compact support, so that
its integrability with respect to Lebesgue measure is the same as the integrability
with respect  to the Gaussian measure.

\begin{theorem}
There exist a vector field $v$ on $\mathbb{R}^2$ with compact support
such that $|v|$ is integrable with respect to Lebesgue measure, hence
with respect to~$\gamma$, and a continuous probability density
$\varrho$ proportional to $\varrho_2$ outside of a ball
such that the measure $\mu$ with density $\varrho$ satisfies
the equation $L_b^*\mu=0$ with $b(x)=-x+v(x)$, where $|x|, |v|\in L^1(\mu)$,
but $\varrho$ does not belong $W^{1,1}_{loc}(\mathbb{R}^2)$.
\end{theorem}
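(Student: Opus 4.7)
The plan is to reduce to the first statement of Theorem~\ref{t3.1} and then exploit the remark made in the paragraph preceding the theorem. By the first part of Theorem~\ref{t3.1} we have a continuous probability density $w$ with compact support in some ball $B_R\subset\mathbb{R}^2$ and a Borel vector field $v_0$ with compact support such that $|v_0|\in L^1(\mathbb{R}^2)$, $\Delta w={\rm div}\,v_0$, and $w\not\in W^{1,1}_{loc}(\mathbb{R}^2)$. The idea is to build $\varrho$ as a convex combination
\begin{equation*}
\varrho=\alpha w+\beta\varrho_2
\end{equation*}
with $\alpha,\beta>0$ chosen so that $\int\varrho\,dx=1$. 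Outside $B_R$ we then have $\varrho=\beta\varrho_2$, which is the required proportionality to $\varrho_2$. Continuity of $\varrho$ is inherited from $w$ and $\varrho_2$; and since $\varrho_2$ is smooth, $\varrho\in W^{1,1}_{loc}$ would force $\alpha w=\varrho-\beta\varrho_2\in W^{1,1}_{loc}$, contradicting the choice of $w$.

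Next I would identify the appropriate $v$. Using that $\varrho_2$ satisfies $\Delta\varrho_2+{\rm div}(x\varrho_2)=0$, we compute
\begin{equation*}
\Delta\varrho+{\rm div}(x\varrho)=\alpha\Delta w+\alpha\,{\rm div}(xw)={\rm div}\bigl(\alpha v_0+\alpha xw\bigr),
\end{equation*}
so the Fokker--Planck--Kolmogorov equation $\Delta\varrho-{\rm div}(\varrho b)=0$ with $b(x)=-x+v(x)$ holds precisely when
\begin{equation*}
v(x):=\frac{\alpha v_0(x)+\alpha xw(x)}{\varrho(x)}.
\end{equation*}
The division is well defined everywhere since $\varrho\ge\beta\varrho_2>0$ on $\mathbb{R}^2$. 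Outside $B_R$ both $v_0$ and $w$ vanish, so $v$ has compact support in $B_R$. On $B_R$ the density $\varrho$ is bounded below by the positive constant $\beta\min_{B_R}\varrho_2$, while $|v_0|\in L^1(B_R)$ and $|x|w$ is bounded and compactly supported, so $|v|\in L^1(\mathbb{R}^2)$; automatically $|v|\in L^1(\gamma)$ since $\gamma$ has bounded density on the compact support of $v$. The two integrability conditions with respect to $\mu$ also hold: $\int|v|\,d\mu\le\|\varrho\|_\infty\|v\|_{L^1}<\infty$ because $w$ is continuous and bounded while $\varrho_2$ is bounded, and $\int|x|\,d\mu\le\alpha\int_{B_R}|x|w\,dx+\beta\int|x|\varrho_2\,dx<\infty$.

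There is no serious obstacle; the only points demanding attention are routine. The mildly delicate one is verifying that the pointwise redefinition $v=(\alpha v_0+\alpha xw)/\varrho$ really produces a Borel field giving the integral identity (\ref{id1}) with $b=-x+v$, but this follows immediately from the weak form of $\Delta w={\rm div}\,v_0$ combined with the weak form of the stationary Kolmogorov equation for the Gaussian, because $\varrho b=-x\varrho+\varrho v=-x\varrho+\alpha v_0+\alpha xw$ and testing against $\varphi\in C_0^\infty(\mathbb{R}^2)$ gives
\begin{equation*}
\int\bigl[\Delta\varphi+\langle b,\nabla\varphi\rangle\bigr]\varrho\,dx=\int\Delta\varphi\,\varrho\,dx-\int\langle x,\nabla\varphi\rangle\varrho\,dx+\int\langle\alpha v_0+\alpha xw,\nabla\varphi\rangle\,dx,
\end{equation*}
which vanishes by the two component identities. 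Then $\mu=\varrho\,dx$ is a probability measure satisfying $L_b^{*}\mu=0$ with all the required properties.
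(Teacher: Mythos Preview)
Your proof is correct and follows essentially the same approach as the paper: add (a multiple of) the compactly supported non-Sobolev density $w$ from Theorem~\ref{t3.1} to (a multiple of) the Gaussian density $\varrho_2$, use the Ornstein--Uhlenbeck identity $\Delta\varrho_2+{\rm div}(x\varrho_2)=0$, and read off the drift. Your version is in fact slightly tidier than the paper's, since you explicitly normalize with $\alpha,\beta$ to get a probability density, whereas the paper simply takes $w+\varrho_2$ (which has total mass~$2$) and leaves the normalization implicit.
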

\begin{proof}
Let us take the function $w\ge0$  and the vector field $v$ with
compact support considered in the proof of Theorem~\ref{t3.1},
where $\Delta w={\rm div}\, v$,
$|v|\in L^1(\mathbb{R}^2)$, and $w\not\in W^{1,1}(\mathbb{R}^2)$.
We take the density $w+\varrho_2$, which satisfies the equation
$$
\Delta (w+\varrho_2)={\rm div}\, (v-x\varrho_2)
={\rm div}\, ((w+\varrho_2)b)
$$
with the drift
$$
 b=\frac{v-x\varrho_2}{w+\varrho_2},
 $$
 which is locally Lebesgue integrable and
 $b(x)=-x$ outside of the support of~$w$, so $|b|(w+\varrho_2)\in L^1(\mathbb{R}^2)$.
 Again, $w+\varrho_2\not\in W^{1,1}(\mathbb{R}^2)$.
\end{proof}

It is worth noting that we have constructed above examples of two types
in which solutions to Fokker--Planck--Kolmogorov equations have no Sobolev
regularity. One example gives a density $\varrho$ with compact support and a drift $b$ with compact
support such that $|b|\varrho$ is integrable, but $|b|$ is not locally Lebesgue integrable.
The other one gives a positive density $\varrho$ and a locally Lebesgue integrable
drift $b$ such that $|b|\varrho$ is integrable on the plane. We have no examples in which
the probability density
$\varrho$ and the drift $b$ have compact support and
 $|b|$ and $|b|\varrho$ are both integrable.
If in the two-dimensional case $|b|$ is locally
integrable to power larger than~$2$, then $\varrho$ not only belongs to $W^{2,1}_{loc}$,
but also has a positive continuous version by Harnack's inequality (see
\cite{BKR01}, \cite{BKR09} or~\cite{BKRS}), so that it is impossible
to make its support compact.

\section{A positive result in the $L^1$-setting}

 Let us prove a positive result on fractional differentiability of solutions.
 Although this result actually follows from the facts presented in the recent
 book~\cite[Chapter~1]{BKRS},
 it is not explicitly formulated there for the whole space in case $p\le d$.
 For the definition of the Sobolev space $H^{p,s}(\mathbb{R}^d)$, see
 \cite[\S\,1.8.1]{BKRS}, \cite{DNPV} or \cite{Stein}; for example, one can set
 $$
 H^{p,s}(\mathbb{R}^d)=(I-\Delta)^{-s/2}(L^p(\mathbb{R}^d)),
 $$
 where the operator $(I-\Delta)^{-s/2}$ is applied in the sense of distributions.

 \begin{theorem}
 {\rm(i)}
 Suppose that $\mu$ is a bounded Borel measure on $\mathbb{R}^d$ satisfying the equation
 $L_b^*\mu=0$ with $|b|\in L^1(|\mu|)$. Then $\mu$ has a density $\varrho$ belonging to the
 fractional Sobolev class $H^{r,\alpha}(\mathbb{R}^d)$ for each
 $r>1$ and $\alpha< 1-d(r-1)/r$, where $1-d(r-1)/r>0$ whenever $1<r<d/(d-1)$. In particular,
 $\varrho\in L^s(\mathbb{R}^d)$ for each exponent $s\in [1,d/(d-1))$.

 {\rm(ii)} If $|b|\in L^p(|\mu|)$ with some $p\in (1,d]$, then $\varrho\in W^{q,1}(\mathbb{R}^d)$
 for each exponent $q<d/(d+1-p)$, hence $\varrho\in L^s(\mathbb{R}^d)$ for all $s<d/(d-p)$.
 \end{theorem}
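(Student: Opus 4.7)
The plan is to derive both parts from the Bessel-potential representation of $\varrho$, combined with standard mapping properties of Bessel kernels, the Hardy--Littlewood--Sobolev inequality, and global Calder\'on--Zygmund bounds. Existence of the density $\varrho$ and the distributional identity $\Delta\varrho = \mathrm{div}(\varrho b)$ under $|b|\in L^1(|\mu|)$ are quoted from \cite{BKR01,BKRS}. Since $\mu$ is bounded and $|\varrho||b|\in L^1(\mathbb{R}^d)$, both $\varrho$ and $F:=\varrho b$ lie in $L^1(\mathbb{R}^d)$. I rewrite the equation as $(I-\Delta)\varrho = \varrho - \mathrm{div}(F)$ and invert:
$$
\varrho \,=\, G_2\ast\varrho \,-\, \sum_{j=1}^{d}\partial_j G_2\ast F_j,
$$
where $G_s$ denotes the Bessel kernel of order $s$.

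For part (i), applying $(I-\Delta)^{\alpha/2}$ to this identity gives $G_{2-\alpha}\ast\varrho - \sum_j \partial_j G_{2-\alpha}\ast F_j$. The first summand is controlled by the strict (non-endpoint) mapping $(I-\Delta)^{-(2-\alpha)/2}\colon L^1\to L^r$, valid for $r<d/(d-2+\alpha)$, a strictly weaker constraint than needed. The binding constraint comes from the second summand: the kernel $\partial_j G_{2-\alpha}$ has Riesz-type singularity $|x|^{1-\alpha-d}$ at the origin and exponential decay at infinity, so $\partial_j G_{2-\alpha}\in L^s(\mathbb{R}^d)$ for every $s<d/(d-1+\alpha)$, and Young's inequality $L^1\ast L^s\subset L^s$ places the second term in $L^r$ for $r<d/(d-1+\alpha)$. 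This condition rearranges exactly to $\alpha < 1 - d(r-1)/r$, yielding $\varrho\in H^{r,\alpha}(\mathbb{R}^d)$. The $L^s$-inclusion for $s<d/(d-1)$ then follows from the Sobolev embedding $H^{r,\alpha}\hookrightarrow L^s$ for $1/s\ge 1/r - \alpha/d$, on letting $r\downarrow 1$ and $\alpha\uparrow 1-d(r-1)/r$: a direct computation shows the embedding exponent tends to $d/(d-1)$.

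For part (ii) I bootstrap. Since $|b|\in L^p(\mu)\subset L^1(\mu)$, part (i) provides the base case $\varrho\in L^{s_0}$ for every $s_0<d/(d-1)$. In the iteration step, if $\varrho\in L^\sigma$, H\"older's inequality applied to the rewriting $\int\varrho^q|b|^q\,dx = \int\varrho^{q(p-1)/p}(\varrho|b|^p)^{q/p}\,dx$ with conjugate exponents $p/(p-q)$ and $p/q$ yields $F=\varrho b\in L^q$ whenever $q\le\sigma p/(\sigma+p-1)$. Differentiating the Bessel representation, the second term becomes a classical Calder\'on--Zygmund operator $(I-\Delta)^{-1}\nabla\,\mathrm{div}$ applied to $F\in L^q$, hence lies in $L^q$ for $1<q<\infty$, while the first term $\nabla G_2\ast\varrho$ sits in every $L^r$ with $r<d/(d-1)$. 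Combined with $\varrho\in L^q$ (since $q\le\sigma$) this produces $\varrho\in W^{q,1}(\mathbb{R}^d)$ globally, and the Gagliardo--Nirenberg--Sobolev embedding $W^{q,1}\hookrightarrow L^{dq/(d-q)}$ for $q<d$ refreshes the exponent to $\sigma' = dq/(d-q)$. A direct calculation shows the map $\sigma\mapsto\sigma'$ is strictly increasing on $[1,\sigma_*)$ with unique fixed point $\sigma_* = d/(d-p)$ corresponding to $q_* = d/(d+1-p)$, so for any target $q<q_*$ finitely many iterations produce $\varrho\in W^{q,1}$; the claimed $L^s$-bound for $s<d/(d-p)$ is then a final Sobolev embedding.

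The main technical point I expect to be delicate is ensuring that every step of the bootstrap is global on $\mathbb{R}^d$ rather than merely local: this relies on the $L^q\to L^q$ boundedness of the Calder\'on--Zygmund operator on the whole space, on the global integrability of $G_2$ and $\nabla G_2$, and on tracking that $q$ stays strictly below $d$ throughout the iteration so that the Sobolev step applies. A minor additional care point is that $\mu$ is allowed to be signed in~(i), so $F$ must be treated as an $L^1$ vector field with no positivity assumption on~$\varrho$.
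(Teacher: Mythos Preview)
Your approach is essentially the same as the paper's: both rewrite the equation as $(I-\Delta)\varrho=\varrho-\mathrm{div}(\varrho b)$ and read off regularity from mapping properties of $(I-\Delta)^{-1}$. The paper phrases part~(i) via the embedding $L^1\hookrightarrow H^{r,-d(r-1)/r-\varepsilon}$ (so that $\mathrm{div}\,F\in H^{r,-1-d(r-1)/r-\varepsilon}$ and hence $\varrho\in H^{r,1-d(r-1)/r-\varepsilon}$), whereas you compute the same thing with explicit Bessel-kernel estimates and Young's inequality; these are equivalent. For part~(ii) the paper uses a slightly different H\"older split and then simply cites the iteration in \cite[Theorem~1.8.2]{BKRS}, while you write out the bootstrap and its fixed point explicitly; again, no substantive difference.

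There is one small slip in your part~(ii) argument. You control the term $\nabla G_2\ast\varrho$ only by saying it lies in $L^r$ for every $r<d/(d-1)$, using just $\varrho\in L^1$. But in later iterations your exponent $q$ can exceed $d/(d-1)$ (for instance, with $d=4$, $p=3$ one has $q_*=2>4/3$, and the iterates cross $4/3$ after the first step), so this bound is not strong enough to place the first term in $L^q$. The fix is immediate and already implicit in your bootstrap: at each step you have $\varrho\in L^\sigma$ with $\sigma>1$, and $\nabla(I-\Delta)^{-1}$ is a bounded Fourier multiplier on $L^\sigma$ for $1<\sigma<\infty$, so $\nabla G_2\ast\varrho\in L^\sigma\subset L^q$ (since you note $q\le\sigma$). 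You should state it this way rather than falling back on the $L^1$ bound.
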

 \begin{proof}
(i) We know that $\mu$ has a density $\varrho\in L^1(\mathbb{R}^d)$. Then
$$
\Delta\varrho-\varrho=g+{\rm div}\, F, \quad F=\varrho b,
$$
where $g:=-\varrho\in L^1(\mathbb{R}^d)$, $|F|\in L^1(\mathbb{R}^d)$. By the Sobolev embedding theorem,
the function $g$ and the components of $F$ belong to the negative Sobolev class
$W^{s,-1}(\mathbb{R}^d)$ with any $s<d'=d/(d-1)$. Therefore, $g, {\rm div}\, F\in W^{s,-2}(\mathbb{R}^d)$, which
yields that $\varrho\in L^{s}(\mathbb{R}^d)$. Moreover, we also have
$$
\Delta\varrho-\varrho\in H^{r,-1-d(r-1)/r-\varepsilon}(\mathbb{R}^d)
\quad \hbox{for all } \ r>1.
$$
Therefore,
$$
\varrho\in H^{r,1-d(r-1)/r-\varepsilon}(\mathbb{R}^d).
$$

(ii) In case $p>1$, by assertion (i) we have $\varrho\in L^s(\mathbb{R}^d)$ for any $s\in [1,d')$.
Let us take $\delta>0$ so small that
$$
1+\delta+\delta^2<d', \quad 1+\delta+\delta^2+\delta^3\le p.
$$
Set
$$
r:=1+\delta, \quad p_1:=1+\delta^2.
$$
By H\"older's inequality
$$
\int_{\mathbb{R}^d}
 |b|^{p_1}|\varrho|^{p_1}\, dx \le
 \biggl(\int_{\mathbb{R}^d} |b|^{p_1r}|\varrho|\, dx\biggr)^{1/r}
\biggl(\int_{\mathbb{R}^d} |\varrho|^{1+(p_1-1)r/(r-1)}\, dx\biggr)^{r/(r-1)}.
$$
By our choice $p_1r\le p$ and $1+(p-1)r/(r-1)=1+\delta(1+\delta)<d'$.
Hence the right-hand side of the previous estimate is finite. This yields the inclusion
$\varrho\in W^{p_1,1}(\mathbb{R}^d)$. Therefore, $\varrho\in L^{p_1d/(d-p_1)}(\mathbb{R}^d)$ by the
Sobolev embedding theorem. Now the same reasoning with iterations as in \cite[Theorem 1.8.2]{BKRS}
in the local case shows that we can raise the order of the Sobolev class for $\varrho$ as close to
$d/(d+1-p)$ as we wish.
 \end{proof}

 In case $p>d$ we have $\varrho\in W^{p,1}(\mathbb{R}^d)$ (see \cite[Chapter~1]{BKRS}),
 hence $\varrho$ has a
 bounded continuous version.

 However, even weaker assumptions are sufficient to increase the global order of integrability
 of $\varrho$. Namely, suppose that $\varrho$ is the density of a bounded measure $\mu$
 satisfying the equation $L_b^*\mu=0$
 with $b$ such that
 $$
\sup_{a\in \mathbb{R}^d} \int_{U(a)} |b(x)|\, |\varrho(x)|\, dx \le M,
 $$
 where $U(a)$ is the ball of radius $1$ centered at $a$. Then
 $\varrho\in L^p(\mathbb{R}^d)$ for every exponent $p\in [1,d/(d-1))$ and
 $$
 \|\varrho\|_{L^p}\le C(M,d,p)\|\varrho\|_{L^1},
 $$
 where $C(M,d,p)$ is a number depending only on $d,p,M$.
Indeed, we can assume that
$\|\varrho\|_{L^1}\le 1$.
It follows from the
local estimates established  in \cite[\S\,1.5]{BKRS} that there is a number $C_1(p,d,M)$ independent of
$\varrho$ such that
$$
\|\varrho\|_{L^p(B(a))}\le C_1(p,d,M) \|\varrho\|_{L^1(U(a))}
$$
for every ball $B(a)$ of radius $1/2$ centered at $a$. Since $\|\varrho\|_{L^1(U(a))}\le 1$, this yields the bound
$$
\|\varrho\|_{L^p(B(a))}^p\le C_1(p,d,M)^p \|\varrho\|_{L^1(U(a))}.
$$
Hence the integral of $|\varrho|^p$ over the whole space is estimated
by $C(d)C_1(p,d,M)^d$ with some number $C(d)$ depending only on $d$.

Note that a local version of the previous theorem is proved in
\cite{BKR97} (see also \cite[Chapter~1]{BKRS}) for nonconstant infinitely
differentiable matrix~$A$. The previous theorem can be also generalized to
nonconstant~$A$ provided that  the second order elliptic operator
$a^{ij}\partial_{x_i}\partial_{x_j} -1$ has the same properties
as the Laplacian in the scale of Sobolev spaces. For example, this is true
if $A(x)=A_0+A_1(x)$, where $A_0$ is a constant positive definite matrix
and $A_1$ has entries in $C_b^\infty(\mathbb{R}^d)$ and $\|A\|_\infty$ is
 sufficiently small.

Once the density $\varrho$ belongs to $H^{p,s}(\mathbb{R}^d)$, one can use known
embedding theorems for fractional Sobolev spaces (see, e.g.,~\cite{DNPV}),
in particular, there is a continuous embedding into $L^{p^{*}}(\mathbb{R}^d)$ with
$p^{*}=dp/(d-sp)$ if $sp<d$. There are also fractional Hardy inequalities
estimating integrals
of functions like $|f(x)|^q/{\rm dist}(x,\partial \Omega)^\alpha$
over a domain $\Omega$ via a suitable fractional Sobolev norm (see \cite{DF}).

For additional results on regularity of solutions in case of $A$ of low regularity,
see the recent paper~\cite{BS17}.

\section{Justification of Theorem~\ref{t2.1}}

Here we present a detailed justification of Theorem~\ref{t2.1}, which
is needed not only because some details have been omitted in~\cite{Orn},
but also because we need a bound with additional terms.
So it does not come as a surprise that our justification is twice longer
than in~\cite{Orn}, although we essentially follow Ornstein's construction and partly
use the same notation. On the other hand, we show below how a similar result
(which can be also used for our purposes) can be
deduced from Ornstein's  example (if we do not intend to provide all details
for the latter).

\begin{proof}[Proof of Theorem~\ref{t2.1}]
For any function $\psi$, let us set
 $\psi_x = \frac{\partial \psi}{\partial x}$,
 $$
 \psi^x(x_0, y_0) = \int_{-1}^{x_0} \psi(x, y_0)\, dx,
 $$
 and let  $Var_x \psi(y_0)$ be the variation of the function
  $s\mapsto \psi(s, y_0)$ on $[-1,1]$ for fixed~$y_0$.
  So if $\psi$ is smooth (but actually at this stage we construct piecewise
  constant functions), $Var_x \psi(y_0)$ is the integral
  of $|\partial_x \psi(s,y_0)|$ in $s$ over $[-1,1]$.
Similarly we define $\psi^y(x_0, y_0)$ and $Var_y \psi(x_0)$.

Let $\delta \in (0, 1)$ be a rational number.
We shall find $g_\delta$ in the form of the integral over $[-1,x]\times [-1,y]$ of a certain smoothing
of a suitable function~$p_n$ described below.
We construct a sequence of functions $p_n(x, y)$ on $[-1,1]^2$
with the following properties (all double integrals are taken over $[-1,1]^2$):

\begin{itemize}
\item[1)] For each $n$ there exist a partition of $[-1, 1]$ on the $y$-axis into
intervals
 $J_{n,i} = \{a_{n,i} \le y \le a_{n,i + 1}\}$ and a partition of the interval
  $[-1, 1]$ on the $x$-axis into intervals
  $L_{n,j} = \{b_{n,j} \le x \le b_{n,j + 1}\}$. The function $p_n$ is defined and constant
on every open rectangle
$(b_{n,j}, b_{n,j + 1}) \times (a_{n,i}, a_{n,i + 1})$, but is not defined on the
boundaries of the rectangles.

\item[2)] $p_n(x, y) = 0$ outside of $[-1, 1]^2$.

\item[3)] ${\displaystyle \int_{-1}^{1} p_n(x_0, y)\, dy = 0 \
\forall\, x_0\in [-1,1]}$,
${\displaystyle\int_{-1}^{1} p_n(x, y_0)\, dx = 0 \ \forall\, y_0\in [-1,1]}$.

\item[4)] ${\displaystyle\iint |p_n(x, y)|\, dx\, dy > C_1 \delta n}$, where $C_1$ is a constant
independent of $n$ and $\delta$.

\item[5)] ${\displaystyle\int_{-1}^1 Var_x p_n^y(y)\, dy < C_2\delta \iint |p_n(x, y)|\, dx\, dy}$,
where  $C_2$ is a constant independent of $n$ and $\delta$.

\item[6)] ${\displaystyle\int_{-1}^{1} Var_y p_n^x(x)\, dx = C_3}$, where $C_3$ is a constant
independent of $n$ and $\delta$. Note that here and in the previous
item we would have $\|\partial_y p_n^x\|_1$ and $\|\partial_x p_n^y\|_1$
for smooth functions.

\item[7)] For every $x_0\in [-1,1]$, the function $p_n^x(x_0, y)$
is nonincreasing piecewise constant on $(a_{n,2}, a_{n,N_n - 1})$,
 and whenever $2 < i < N_n$
one has
$$
p_n^x(x_0, y_i) - p_n^x(x_0, y_{i - 1}) \le 2^{-(n - 1)},
\quad y_i \in J_{n,i}, y_{i - 1} \in J_{n,i - 1}.
$$

\item[8)] $|p_n^x(x, y)| \le 1$, $|p_n^y(x, y)| \le \delta (2 - 2^{-(n - 1)})$
for all $x, y\in [-1,1]$.

\item[9)] $p_n(x, y) = -p_n(x, -y)$.
\end{itemize}

We now define the function $p_1$:
$$
p_1 = 1 \quad \mbox{if} \quad
(x, y) \in (-1/2, 0) \times (-\delta, -\delta/4) \cup (0, 1/2) \times (\delta/4, \delta),
$$
$$
p_1 = -1 \quad \mbox{if} \quad
(x, y) \in (0, 1/2) \times (-\delta, -\delta/4) \cup (-1/2, 0) \times (\delta/4, \delta),
$$
and $p_1 = 0$ else.

Then
$$\iint |p_1(x, y)|\, dx \,dy = 3\delta/2,
$$
$$
\int_{-1}^1 Var_x p_1^y(y)\, dy \le \widetilde{C}_1  \delta^2,
$$
$$
\int_{-1}^1 Var_y p_1^x(x)\, dx =: C_3,
$$
where $C_3$ is independent of $\delta$, because
$$
p_1^x(x,y)=0 \quad \hbox{outside of} \quad
[-1/2,1/2]\times ([-\delta, -\delta/4]\cup [\delta/4, \delta])
$$
and $p_1^x(x,y)=(1-2|x|){\rm sign}\, y$ else, so
$Var_y p_1^x(x)=0$ if $x\in [-1,-1/2]\cup [1/2,1]$,
$Var_y p_1^x(x)=-2|x|+1$ if $x\in (-1/2,1/2)$.
Hence  $p_1$ satisfies 1)--9).

Suppose that $p_n$ is defined and show how to define $p_{n + 1}$.
It suffices to define $p_{n + 1}$ for $y < 0$ and use 9) to extend to $y > 0$.
For every $i > 2$ we take the interval
 $\widetilde J_{n,i} = (a_{n,i} - \alpha/2, a_{n,i} + \alpha/2)$ of length
  $\alpha$ (where a rational number $\alpha$ will be chosen later).
  Outside of $(-1, 1) \times (\cup_{i} \widetilde J_{n,i})$ we let $p_{n + 1}=p_n$.
On $(-1, 1) \times \widetilde J_{n,i}$ we define $p_{n + 1}$
as the sum of two functions $r_1^i$ and $r_2^i$ (we omit $n$ in their notation), where
$$
r_1^i(x, y) = \frac{p_n(x, y_{i - 1}) + p_n(x, y_i)}{2},
\quad y\in \widetilde{J}_{n,i}, \ y_{i - 1} \in J_{n,i - 1}, \ y_i \in J_{n,i},
$$
outside of the strips $(-1, 1) \times \widetilde J_{n,i}$ we let $r_1^i=0$,
and $r_2^i$ is defined as follows.
We partition the strip
 $(-1, 1) \times \widetilde J_{n,i}$ into rectangles $K_k$ of height
  $\alpha$ and width $\alpha/\delta$ (again we suppress $n$ in this notation).
  Next, each rectangle $K_k$ is partitioned into four rectangles
of height $\alpha/2$ and width $\alpha/(2\delta)$ each.
Take $\beta^i_k$ such that
$$
\frac{\alpha}{2\delta} \beta^i_k
= \frac{p_n^x(x_k, y_{i - 1})
- p_n^x(x_k, y_i)}{4}, \quad y_{i - 1} \in J_{n,i  - 1}, y_i \in J_{n,i},
$$
where $x_k$ is the $x$-coordinate of the center of the rectangle $K_k$.
Define $r_2^i$ to equal $\beta_k^i$
 on the lower left and upper right rectangle of $K_k$, and let $r_2^i$ equal
 $-\beta_n^i$ on the remainder of~$K_k$.
 Outside of the strips $(-1, 1) \times \widetilde J_{n,i}$ we let $r_2^i=0$.

Let us verify that $p_{n + 1}$ satisfies 1)--9). Properties
1), 2), 9) are obvious. Property~3) follows from~9) and the fact that
$$
\int_{-1}^{1} r_1^i(x, y_0)\, dx = 0,
\quad
\int_{-1}^{1} r_2^i(x, y_0)\, dx = 0
\quad \forall\, y_0\in [-1,1].
$$

Let us prove that $p_{n + 1}$ satisfies 7).
It suffices to show that for all $i > 2$ and all $x$
\begin{multline}\label{eq5.1}
p_{n + 1}^x(x, y_{i - 1}) - p_{n+1}^x(x, y'),
p_{n+1}^x(x, y') - p_{n+1}^x(x, y''), p_{n+1}^x(x, y'') - p_{n + 1}^x(x, y_i)
\\
\in \Bigl[0, \frac{p_n^x(x, y_i) - p_n^x(x, y_{i - 1})}{2}\Bigr],
\end{multline}
where $y_{i - 1} \in (a_{n,i - 1}, a_{n,i} - \alpha/2)$,
$y' \in (a_{n,i} - \alpha/2, a_{n,i})$,
$y'' \in (a_{n,i}, a_{n,i} + \alpha/2)$,
$y_i \in (a_{n,i} + \alpha/2, a_{n,i + 1})$.
Since for every  $y_0$ the function
 $p_{n + 1}^x(x, y_0)$ is linear on the intervals
  $[0, \alpha/(2\delta)]$,
  $[\alpha/(2\delta), \alpha/\delta],\dots$, it suffices to verify (\ref{eq5.1})
for the endpoints $x \in \{\alpha m/(2\delta)\colon m \in \mathbb N\}$.
First we consider the endpoints of the form $x = \alpha m/\delta$, $m \in \mathbb N$.
In this case in our calculation of $p_{n+1}^x$ the $\beta$-terms mutually cancel, hence
$$
 p_{n+1}^x(x, y') =  p_{n+1}^x(x, y'') = \frac{p_n^x(x, y_i) + p_n^x(x, y_{i - 1})}{2},
$$
so (\ref{eq5.1}) is fulfilled.
The remaining endpoints are  the $x$-coordinates $x_k$ of the centers of the rectangles $K_k$
(i.e., equal $\alpha m/(2\delta)$ with odd~$m$). For them we have
$$
p_{n+1}^x(x_k, y') =
\frac{p_n^x(x_k, y_i) + p_n^x(x_k, y_{i - 1})}{2}
 + \frac{\alpha}{2\delta} \beta^i_k,
$$
$$
p_{n+1}^x(x_k, y'') =  \frac{p_n^x(x, y_i) + p_n^x(x, y_{i - 1})}{2} -
\frac{\alpha}{2\delta} \beta^i_k,
$$
and the definition of $\beta^i_k$ yields that
$$
p_{n + 1}^x(x_k, y_{i - 1}) -  p_{n+1}^x(x_k, y') =
 \frac{p_n^x(x_k, y_{i - 1}) - p_n^x(x_k, y_i)}{4},
$$
$$
p_{n + 1}^x(x_k, y') -  p_{n+1}^x(x_k, y'')
= \frac{p_n^x(x_k, y_{i - 1}) - p_n^x(x_k, y_i)}{2},
$$
$$
p_{n + 1}^x(x_k, y'') -  p_{n+1}^x(x_k, y_i)
= \frac{p_n^x(x_k, y_{i - 1}) - p_n^x(x_k, y_i)}{4}.
$$
Hence 7) is fulfilled.

We observe that  7) obviously yields 6): since
  $p_{n + 1}^x(x, y) = p_1^x(x, y)$ whenever
   $(x, y) \in (0, 1) \times (a_{n,1}, a_{n,N_n})$ and $p_{n + 1}^x(x, y)$
   is a nonincreasing function of $y$ on the interval
   $(a_{n,2}, a_{n,N_n - 1})$, one has
$Var_y p_{n + 1}^x(x) = Var_y p_1^x(x)$, whence we obtain~6).

Let us show that $p_{n + 1}$ satisfies 8).
Since  $p_{n + 1}^x(x, y) = p_1^x(x, y)$ whenever
 $(x, y) \in (0, 1) \times (a_{n,1}, a_{n,N_n})$ and $p_{n + 1}^x(x, y)$ is a nonincreasing
 function of $y$ on $(a_{n,2}, a_{n,N_n - 1})$,
 one has
 $$
 \max_{x, y} |p_{n + 1}^x(x, y)| \le \max_{x, y} |p_1^x(x, y)| \le 1.
 $$
Let us estimate
$\max_{x, y} |p_{n + 1}^y(x, y)|$. If $y \notin \cup \widetilde J_{n,i}$, then
$p_{n + 1}^y(x, y) = p_n^y(x, y)$, since
$$
\int_{\widetilde J_{n,i}} [r^1_i(x, y) + r^2_i(x, y)]\, dy
= \int_{\widetilde J_{n,i}} p_n(x, y)\, dy.
$$
If $y \in \widetilde J_{n,i}$, then
\begin{align*}
|p_{n + 1}^y(x, y)| &\le |p_{n}^y(x, y_{i - 1})|
+ \frac{\alpha}{2} \max_{k} \beta^i_k
\\
&= |p_{n}^y(x, y_{i - 1})| + \frac{\delta}{2}
 \max_{k} (p_n^x(x_k, y_{i - 1}) - p_n^x(x_k, y_i))
\\
&\le \delta (2 - 2^{-(n - 1)}) + \delta 2^{-n} = \delta (2 - 2^{-n}),
\end{align*}
where the last inequality follows from the fact that $p_n$ satisfies 8) and 7).

Let us show that $p_{n + 1}$ satisfies 5) for sufficiently small $\alpha$. We observe that
for every $i$ we have
$$
\int_{-1}^1 Var_x (r_2^i)^y(y)\, dy =
\widetilde{C}_2 \alpha^2 \sum_{k} \beta_k^i,
$$
where $\widetilde{C}_2$ is a  constant (independent of $n$ and $\delta$) and
$$
\iint |r_2^i| \, dx \, dy = \frac{\alpha^2}{\delta} \sum_{k} \beta_k^i.
$$
Hence
$$
\int_{-1}^1 Var_x (r_2^i)^y(y)\, dy =\widetilde{C}_2 \delta \iint |r_2^i|\, dx\, dy.
$$
Note that
$$
\lim_{\alpha \to 0} \int_{-1}^1 Var_x r_1^y(y)\, dy = 0,
\quad  \lim_{\alpha \to 0} \iint |r_1|\, dx\, dy = 0,
\quad \mbox{where} \quad r_1 = \sum_{i} r_1^i.
$$
Therefore, for sufficiently small $\alpha$ there holds the inequality
$$
\int_{\cup \widetilde J_{n,i}} Var_x p_{n + 1}^y(y)\, dy
< (\widetilde{C}_2+1) \delta \int_{\cup \widetilde J_{n,i}}
 \int_{-1}^{1} |p_{n + 1}(x, y)|\, dx dy.
$$
We have
$$
Var_x p_{n + 1}^y(y) =  Var_x p_{n}^y(y) \quad \mbox{if}
\quad y \not\in \cup \widetilde J_{n,i},
$$
because $p_{n+1}^y(x,y)=p_{n}^y(x,y)$ if $y\not\in \cup \widetilde J_{n,i}$.
Since $p_n$ satisfies 5), one has
$$
\int_{-1}^1 Var_x p_{n + 1}^y(x, y) \, dy <
\max(C_2, \widetilde{C}_2+1) \delta \iint |p_{n + 1}(x, y)|\, dx\, dy,
$$
whence we obtain that  $p_{n + 1}$ satisfies 5)
(from the very beginning we take $C_2 > \widetilde{C}_2+1$, which is possible, since
$\widetilde{C}_2$ is a universal constant independent of $n$ and~$\delta$).

Let us show that $p_{n + 1}$ satisfies  4) for sufficiently small $\alpha$.
We have
$$
\lim_{\alpha \to 0} \iint |p_{n + 1}| \, dx\, dy =
\iint |p_n|\, dx\, dy + \lim_{\alpha \to 0} \sum_{i}
\int_{\widetilde J_{n,i}} \int_{-1}^{1} |r^2_i(x, y)|\, dx\, dy,
$$
\begin{multline*}
\lim_{\alpha \to 0} \int_{\widetilde J_{n,i}} \int_{-1}^{1} |r^2_i(x, y)|\, dx\, dy =
\lim_{\alpha \to 0} \frac{\alpha^2}{\delta} \sum_{k} \beta^i_k
\\
= \lim_{\alpha \to 0} \frac{\alpha}{2} \sum_k (p_n^x(x_k, y_{i - 1}) - p_n^x(x_k, y_i))
 \\
= \frac{\delta}{2} \int_{-1}^{1} (p_n^x(x, y_{i - 1}) - p_n^x(x, y_i))\, dx,
\quad \mbox{where} \quad y_{i - 1} \in J_{n,i - 1}, y_i \in J_{n,i}.
\end{multline*}
The last equality is just the limit of the Riemann sums with partitions of length~$\alpha/\delta$.
Hence
$$
\lim_{\alpha \to 0} \iint |p_{n + 1}| \, dx\, dy
= \iint |p_n|\, dx\, dy + \frac{1}{2}\delta \int_{-1}^1 Var_y p_n^x(x)\, dx
> C_1\delta n + \frac{C_3}{2}\delta,
$$
since $p_n$ satisfies  4) and
 $$
 \int_{-1}^{1} Var_y p_n^x(x)\, dx =  C_3.
 $$
 We now take $C_1 < \min(C_3/2,3/2)$.

For $p_n$ we have
$$
\int_{-1}^1 Var_x p_n^y(y) \, dy
+ \int_{-1}^1 Var_y p_n^x(x)\, dx
+ \|p_n^x\|_{\infty} + \|p_n^y\|_{\infty} < C_2 \delta \|p_n\|_1 + C_3 + 1 + 2\delta,
$$
$$
\|p_n\|_1 > C_1 \delta n.
$$
Hence for sufficiently large $n$ we obtain
\begin{equation}\label{ek5.2}
\int_{-1}^1 Var_x p_n^y(y) \, dy + \int_{-1}^1 Var_y p_n^x(x) \, dx
+ \|p_n^x\|_{\infty} + \|p_n^y\|_{\infty} < C_2 \delta \|p_n\|_1.
\end{equation}
For each  $n$ we can smooth $p_n$
in the variable~$x$ as follows.
Let $\varrho  \in C^\infty(\mathbb R)$ be a probability density with support in
$[-1, 1]$. Let
$$
q_n(x, y) = \int_{-1}^1 p_n(x - t, y) \varrho _\varepsilon(t)\, dt,
\quad \mbox{where} \quad \varrho _{\varepsilon}(t)
= \frac{1}{\varepsilon}\varrho \Bigl(\frac{t}{\varepsilon}\Bigr),
\ \varepsilon>0.
$$
We do not indicate dependence of $q_n$ on $\varepsilon$ that will be taken
sufficiently small.
Then
 $\|q_n\|_1 \to \|p_n\|_1$ as $\varepsilon \to 0$.
 The functions  $q_n^x$ and $q_n^y$ satisfy the equalities
$$
q_n^x(x, y) = \int_{-1}^1 p_n^x(x - t, y)
\varrho _\varepsilon(t)\, dt, \quad q_n^y(x, y)
= \int_{-1}^1 p_n^y(x - t, y) \varrho _\varepsilon(t)\, dt.
$$
It follows that
$$
\|q_n^x\|_\infty \le  \|p_n^x\|_\infty,
\quad
\|q_n^y\|_\infty \le  \|p_n^y\|_\infty.
$$
Let us estimate
 $$
 \int_{-1}^1 Var_x q_n^y(y)\, dy \quad\hbox{and}\quad
 \int_{-1}^1 Var_y q_n^x(x)\, dx
 $$
 from above. We have
\begin{multline*}
Var_x q_n^y(y)
= \sup\Bigl\{\sum_i |q_n^y(c_i, y) - q_n^y(c_{i - 1}, y)|\colon\
 -1 \le c_1 \le \cdots \le c_n \le 1\Bigr\}
  \\
= \sup\biggl\{\sum_i \biggl|\int_{-1}^1 p_n^y(c_i - t, y) - p_n^y(c_{i - 1} - t, y)
\varrho _\varepsilon(t)\, dt\biggr| \biggr\}
\\
\le \sup\biggl\{\int_{-1}^1
\sum_i |p_n^y(c_i - t, y) - p_n^y(c_{i - 1} - t, y)|
\varrho _\varepsilon(t)\, dt \biggr\} \le Var_x p_n^y(y).
\end{multline*}
Therefore,
$$
\int_{-1}^1 Var_x q_n^y(y)\, dy \le \int_{-1}^1 Var_x p_n^y(y)\, dy,
$$
\begin{multline*}
Var_y q_n^x(x) = \sup
\Bigl\{\sum_i |q_n^x(x, c_i) - q_n^x(x, c_{i - 1})|\colon\
 -1 \le c_1 \le \cdots \le c_n \le 1\Bigr\}
 \\
= \sup\biggl\{\sum_i \biggl|\int_{-1}^1 p_n^x(x - t, c_i)
- p_n^x(x - t, c_{i - 1}) \varrho _\varepsilon(t)\, dt\biggr| \biggr\}
\\
\le \sup\biggl\{\int_{-1}^1 \sum_i
|p_n^x(x - t, c_i) - p_n^x(x - t, c_{i - 1})| \varrho _\varepsilon(t)\, dt \biggr\}
\le
\int_{-1}^1 Var_y p_n^x(x - t) \varrho _\varepsilon(t)\, dt,
\end{multline*}
which yields that
$$
\int_{-1}^1 Var_y q_n^x(x)\, dx \le \int_{-1}^1 Var_y p_n^x(x)\, dx.
$$
Therefore, for sufficiently small
 $\varepsilon$, for $q_n$ we have inequality (\ref{ek5.2}).

Let us show that $q_n$ has property 3) from the list for $p_n$, i.e., we have to show that
 $q_n^y(x, 1) = 0$ for all $x$ and $q_n^x(1, y) = 0$ for all~$y$.
 This is needed in order to ensure that $q_n^y$ and $q_n^x$ vanish outside of~$[-1,1]^2$.
The equality $q_n^y(x, 1) = 0$  follows from property~3) for $p_n$ and the fact that $p_n^y(x, 1) = 0$
for all~$x$. In addition,
\begin{align*}
q_n^x(1, y) &= \int_{-1}^1 q_n(x, y)\, dx
= \iint p_n(t, y) \varrho _\varepsilon(x - t)\, dt\, dx
 \\
&=\int_{-1}^1 p_n(t, y) \int_{-1}^1 \varrho_\varepsilon(x - t)\, dx\,  dt
= \int_{-1}^1 p_n(t, y)\, dt = 0.
\end{align*}
Similarly, smoothing the constructed function in the variable $y$,
 we obtain a function of class~$C_0^\infty$, again denoted by $q_n$, satisfying~3) from the list for~$p_n$
 and  inequality~(\ref{ek5.2}).

Let $g_\delta = (q_n^{x})^y$. Then $g_\delta \in C_0^\infty$ and
\begin{multline*}
\|\partial_x^2 g_\delta\|_1
+ \|\partial_y^2 g_\delta\|_1 +
\|\partial_x g_\delta\|_{\infty}
+ \|\partial_y g_\delta\|_{\infty}
= \|(q_n^y)_x\|_1 + \|(q_n^x)_y\|_1
+ \|q_n^x\|_{\infty} + \|q_n^y\|_{\infty}
\\
= \int_{-1}^1 Var_x q_n^y(y)\, dy + \int_{-1}^1 Var_y q_n^x(x)\, dx
+ \|q_n^x\|_{\infty} + \|q_n^y\|_{\infty},
\end{multline*}
because
 $$
 Var_x q_n^y(y) = \int_{-1}^1 |(q_n^y)_x(x, y)|\, dx,
 $$
 $$
 Var_y q_n^x(x) = \int_{-1}^1 |(q_n^x)_y(x, y)|\, dy.
 $$
Hence
$$
\|\partial_x \partial_y g_\delta\|_1
> \frac{1}{C_2 \delta} \Bigl(\|\partial_x^2 g_\delta\|_1
+ \|\partial_y^2 g_\delta\|_1 +
\|\partial_x g_\delta\|_{\infty}
+ \|\partial_y g_\delta\|_{\infty} \Bigr),
$$
which completes the justification of the first claim. Now the second one follows
by the closed graph theorem. Indeed, if there is no function with the desired properties,
then we obtain a linear operator $T$ from the space $E$ of Lipschitz functions
$f$ on the square $[-1,1]^2$ vanishing on the boundary and having
Sobolev repeated derivatives $\partial_x^2f$ and $\partial_y^2f$ in $L^1([-1,1]^2)$
to the space $L^1([-1,1]^2)$ defined by $Tf=\partial_x\partial_yf$, where
$\partial_x\partial_y$ is taken in the sense of distributions. The space $E$
is Banach with respect to the natural norm
$$
\|f\|_E=\|f\|_{\rm Lip}+\|\partial_x^2f\|_1+\|\partial_y^2f\|_1,
$$
where the Lipschitz norm $\|f\|_{\rm Lip}$ is defined by
$$
\|f\|_{\rm Lip}=\max_{[-1,1]^2} |f(x,y)|+L(f),
$$
and $L(f)$ is the minimal Lipschitz constant for $f$. The graph
of the operator $T$ is closed, which is seen, for example, from the fact
that $T$ is continuous
on $E$ with values in the space of distributions (or
in the negative Sobolev space $W^{2,-2}([-1,1]^2)$) and $L^1([-1,1]^2)$
is continuously embedded into the space of distributions (respectively,
into $W^{2,-2}([-1,1]^2)$). Similarly one can obtain a function $f$
of class~$C^1$: in the definition of $E$ we replace the class of Lipschitz functions
by the space $C^1([-1,1]^2)$ with its natural norm.
\end{proof}

We now show how a similar result can be deduced from Ornstein's example.
We are grateful to A.V.~Shaposhnikov for suggesting the following lemma.

\begin{lemma}
Let $B = B(0, 1)$ be the open unit ball in $\mathbb{R}^2$.
There is no number $C$ such that for every function $f \in C_{0}^{\infty}(B)$
one has
$$
\|\partial^2_{x} f\|_{1} \le
C (\| f\|_{\infty} +
\|\nabla f\|_{\infty} + \|\Delta f\|_1).
$$
\end{lemma}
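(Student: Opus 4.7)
The plan is to argue by contradiction, using rotation invariance of the right-hand side to reduce to Ornstein's original example. Suppose such a constant $C$ exists. Since the unit ball $B$ is invariant under every rotation $R\in SO(2)$, and each of the three quantities $\|f\|_\infty$, $\|\nabla f\|_\infty$, $\|\Delta f\|_1$ is preserved when $f$ is replaced by $f\circ R$ (the Laplacian commutes with rotations and orthogonality of $R$ preserves Euclidean gradient lengths), applying the putative estimate to $f\circ R$ and changing variables back yields
$$
\|\partial_v^2 f\|_1 \le C\bigl(\|f\|_\infty+\|\nabla f\|_\infty+\|\Delta f\|_1\bigr)
\qquad \forall\, f\in C_0^\infty(B),\ |v|=1.
$$
The polarization identity $\partial_x\partial_y = \tfrac12(\partial_{v_+}^2-\partial_{v_-}^2)$ with $v_\pm=(e_1\pm e_2)/\sqrt 2$ then gives, with a new constant,
$$
\|\partial_x\partial_y f\|_1 \le C\bigl(\|f\|_\infty+\|\nabla f\|_\infty+\|\Delta f\|_1\bigr)
\qquad \forall\, f\in C_0^\infty(B).
$$

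The second step is to refute this by plugging in Ornstein's sequence. Ornstein provides $g_n\in C_0^\infty([-1,1]^2)$ with $\|\partial_x\partial_y g_n\|_1\to\infty$ while $\|\partial_x^2 g_n\|_1+\|\partial_y^2 g_n\|_1$ stays bounded, so that $\|\Delta g_n\|_1$ does as well. His construction of $g_n$ as iterated primitives of a piecewise constant building block $p_n$ (reproduced in Section~5) yields uniformly bounded $\|\partial_x g_n\|_\infty$ and $\|\partial_y g_n\|_\infty$, because the corresponding primitives $p_n^x,p_n^y$ are uniformly bounded; the Poincar\'e inequality for functions vanishing on the boundary of a bounded square then gives $\|g_n\|_\infty = O(1)$. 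An isotropic dilation $\tilde g_n(x)=g_n(x/c)$ with $c\le 1/\sqrt 2$ places the support inside $B$ and preserves every $L^1$ norm of second derivatives while only multiplying $\|\nabla g_n\|_\infty$ by a universal constant. The right-hand side of the derived inequality therefore remains bounded while the left-hand side diverges, contradicting its validity.

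The main obstacle is ensuring the uniform bound on $\|\nabla g_n\|_\infty$, which is not part of Ornstein's published statement but follows from inspection of his construction (item~8) in Section~5 of the paper). If one wishes to avoid that inspection altogether, one can instead apply Theorem~\ref{t2.1}: normalising $\|\partial_x\partial_y g_\delta\|_1=1$ in~\eqref{e2.1} forces $\|\partial_x^2 g_\delta\|_1,\|\partial_y^2 g_\delta\|_1,\|\partial_x g_\delta\|_\infty,\|\partial_y g_\delta\|_\infty\le\delta$, and then $\|g_\delta\|_\infty\le 2\delta$ by the same Poincar\'e argument, so the right-hand side of the rotation-derived inequality tends to $0$ as $\delta\to 0^+$, again delivering the contradiction.
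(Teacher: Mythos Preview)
Your argument is correct, but it takes a different route from the paper's and, in doing so, partly undermines the lemma's purpose. The paper proves the lemma by a scaling and packing trick: assuming the inequality, one places $M\sim N^2$ disjoint rescaled copies of a fixed $f$ inside~$B$; the $L^1$-norms of second derivatives add up (factor~$M$) while $\|f\|_\infty$ is unchanged and $\|\nabla f\|_\infty$ only gains a factor~$N$. Dividing by $M$ and sending $N\to\infty$ kills the lower-order terms and leaves $\|\partial_x^2 f\|_1\le C\|\Delta f\|_1$, which contradicts Ornstein's \emph{original} result used as a black box.

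Your approach instead transfers the assumed inequality to $\|\partial_x\partial_y f\|_1$ via rotation and polarization, and then needs an Ornstein-type sequence for which $\|g_n\|_\infty$ and $\|\nabla g_n\|_\infty$ are controlled. As you note, this extra control is not in Ornstein's published statement; you obtain it either by inspecting the construction reproduced in Section~5 or by invoking Theorem~\ref{t2.1} outright. Both options work, but they lean on precisely the material the lemma is meant to bypass: the paper introduces this lemma to show that the conclusion can be derived from Ornstein's result without the detailed modification. The paper's scaling argument achieves this cleanly, whereas your route, while valid, is not self-contained relative to the unadorned Ornstein theorem.
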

\begin{proof}
Let us assume that such $C$
exists. Then for all $f\in C_{0}^{\infty}(B)$ we have
\begin{equation}\label{assumption}
\|\partial^2_{x} f\|_{1} \le
C (\| f\|_{\infty} + \|\nabla f\|_{\infty} + \|\Delta f\|_1).
\end{equation}
Let us fix $f \in C_{0}^{\infty}(B)$.
For any point $P = (P_x,  P_y) \in  B(0, 1)$ and any number $N \in \mathbb{N}$
we define $g_{P, N}$ as follows:
$$
g_{P, N} (x, y) := f(N(x - P_x), N(y - P_y)).
$$
It is easy to see that
$$
{\rm supp}(g_{P, N}) \subset B(P, 1/N),
$$
$$
\| g_{P, N} \|_{\infty} = \|f\|_{\infty},
\quad
\| \nabla  g_{P, N} \|_{\infty} = N \|f\|_{\infty},
$$
$$
\|\partial^2_{x} g_{P, N}\|_{1} =
\|\partial^2_{x} f\|_{1},
\quad
\|\Delta g_{P, N}\|_1 = \|\Delta f\|_1.
$$
Let us take $M= N^2/100$ disjoint balls
 $\{B(P_i, 1/N)\}_{i = 1}^{M}$ in $B(0, 1)$.
Let us define $g_N$ by the following formula:
$$
g_N := \sum_{i = 1}^{M} g_{P_i, N}.
$$
Then
$$
\|g_N\|_{\infty} = \| f\|_{\infty},
\quad
\| \nabla  g_{N} \|_{\infty} = N \|f\|_{\infty},
$$
$$
\|\partial^2_{x} g_{N}\|_{1} =
M\|\partial_{x}\partial_y f\|_{1},
\quad
\|\Delta g_{N}\|_1 = M\|\Delta f\|_1.
$$
Next we apply (\ref{assumption}) to the function $g_N$:
$$
M\|\partial^2_{x} f\|_{1}
\le
C(\|f\|_{\infty} + N \|\nabla f\|_{\infty}  + M\|\Delta f\|_1),
$$
which is
$$
\|\partial^2_{x} f\|_{1}
\le
C\Bigl(\frac{1}{M}\|f\|_{\infty} + \frac{N}{M} \|\nabla f\|_{\infty}  + \|\Delta f\|_1\Bigr).
$$
Letting $N \to \infty$ we obtain
$$
\|\partial^2_{x} f\|_{1}
\le C \|\Delta f\|_1.
$$
Now it is easy to see that since $f$ was an arbitrary
function in $C_{0}^{\infty}(B)$, this inequality holds
for every function $f \in C_{0}^{\infty}(\mathbb{R}^2)$.
This contradicts the result of Ornstein.
\end{proof}

We now prove an analog of Theorem~\ref{t2.1} (with the repeated derivative
in place of the mixed derivative).

\begin{theorem}
Let $B = B(0, 1)$ be the open unit ball in
$\mathbb{R}^2$.
There exists a Lipschitz function $f$ on $B$
such that
$$
\Delta f \in L^{1}(B), \ \partial^{2}_{x} f \notin L^{1}(B).
$$
\end{theorem}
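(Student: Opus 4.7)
The plan is to argue by contradiction via the closed graph theorem, in exact parallel with the derivation of the Lipschitz part of the second claim of Theorem~\ref{t2.1} from the quantitative first claim. Let $E$ be the space of Lipschitz functions $f$ on $\overline{B}$ vanishing on $\partial B$ whose distributional Laplacian lies in $L^1(B)$, equipped with the norm
$$
\|f\|_E = \|f\|_{\infty} + L(f) + \|\Delta f\|_{L^1(B)},
$$
where $L(f)$ is the minimal Lipschitz constant. I would first check that $E$ is Banach: a Cauchy sequence $(f_n)$ converges uniformly to a function $f$ with $L(f) \le \liminf L(f_n)$ and $f|_{\partial B} = 0$, while $\Delta f_n$ converges in $L^1(B)$ to some $g$; since uniform convergence implies convergence in $\mathcal{D}'(B)$, one has $\Delta f_n \to \Delta f$ in $\mathcal{D}'(B)$, which identifies $\Delta f = g$ and yields $f \in E$ with $f_n \to f$ in the $E$-norm.

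Assume, for contradiction, that no $f$ as in the theorem exists. Then in particular $\partial_x^2 f \in L^1(B)$ for every $f \in E$, so the linear map $T\colon E \to L^1(B)$ defined by $Tf = \partial_x^2 f$ in the distributional sense is everywhere defined. I would then show $T$ has closed graph: if $f_n \to f$ in $E$ and $Tf_n \to h$ in $L^1(B)$, then $f_n \to f$ in $\mathcal{D}'(B)$ (by uniform convergence) and hence $\partial_x^2 f_n \to \partial_x^2 f$ in $\mathcal{D}'(B)$; simultaneously $\partial_x^2 f_n = Tf_n \to h$ in $L^1(B) \hookrightarrow \mathcal{D}'(B)$, which forces $\partial_x^2 f = h$ and puts $(f, h)$ in the graph of $T$.

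The closed graph theorem now yields a constant $C$ with
$$
\|\partial_x^2 f\|_{L^1(B)} \le C\bigl(\|f\|_{\infty} + L(f) + \|\Delta f\|_{L^1(B)}\bigr) \quad \text{for all } f \in E.
$$
Since $C_0^\infty(B) \subset E$ and on this subspace $L(f) = \|\nabla f\|_{\infty}$, the bound specializes to
$$
\|\partial_x^2 f\|_{L^1(B)} \le C\bigl(\|f\|_{\infty} + \|\nabla f\|_{\infty} + \|\Delta f\|_{L^1(B)}\bigr) \quad \text{for all } f \in C_0^\infty(B),
$$
which is precisely the estimate denied by the preceding lemma. This contradiction completes the proof.

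The only nontrivial point is bookkeeping in the distributional framework: one needs the completeness of $E$ together with the continuity of the distributional derivative operators $\Delta$ and $\partial_x^2$ from $E$ into $\mathcal{D}'(B)$, so that both coordinates of a convergent sequence in the graph can be legitimately identified after passing to limits. I do not expect any genuine technical obstacle beyond this; the whole argument is a routine application of the closed graph theorem once the lemma is in hand.
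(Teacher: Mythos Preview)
Your proof is correct and takes essentially the same approach as the paper: define a Banach space of Lipschitz functions with integrable Laplacian, assume the theorem fails so that $\partial_x^2$ maps this space into $L^1(B)$, and invoke the closed graph theorem to produce a uniform bound on $C_0^\infty(B)$ that contradicts the preceding lemma. The only cosmetic difference is that the paper works with the completion $X$ of $C_0^\infty(B)$ in the norm $\|f\|_\infty+\|\nabla f\|_\infty+\|\Delta f\|_1$, whereas you define the space $E$ directly and verify its completeness; the logic is identical.
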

\begin{proof}
Let $X$ be the completion
of $C_0^{\infty}(B)$ with respect to the norm
$$
\|f\|_X := \| f\|_{\infty} +
\|\nabla f\|_{\infty} + \|\Delta f\|_1.
$$
Let us assume that for each $f \in X$ we have
$\partial^{2}_{x} f \in L^{1}(B)$, where
$\partial^{2}_{x} f$ is understood in the sense of
distributions. Then by the closed graph theorem the operator $f\mapsto \partial_x^2f$
from $X$ to $L^1(B)$ is bounded. This contradicts the previous lemma.
\end{proof}

This result can be used in our main construction.


\vskip .2in

Vladimir I. Bogachev and Stanislav V. Shaposhnikov:
Department of Mechanics and Mathematics, Moscow State University, 119991 Moscow, Russia.

email: vibogach@mail.ru

and  National Research University Higher School of Economics, ul. Usacheva 6, 119048 Moscow, Russia

 Svetlana N. Popova:
Department of Mechanics and Mathematics, Moscow State University, 119991 Moscow, Russia


\begin{thebibliography}{99}

\bibitem{AN}
Amrouchea, C., Nguyenb, H.H.:
New estimates for the div, curl, grad operators and elliptic problems
with $L^1$-data in the half-space.
Applied Math. Letters,  24,  697--702 (2011)


\bibitem{BKR96}
 Bogachev, V.I., Krylov, N.V., R\"ockner, M.:
 Regularity of invariant measures: the case of non-constant
diffusion part. J.~Funct. Anal., 138, 223--242 (1996)

\bibitem{BKR97}
Bogachev, V.I., Krylov, N.V., R\"ockner, M.: Elliptic regularity and
essential
self-adjointness of  Dirichlet operators on $R^n$. Ann. Scuola Norm. Super.
di Pisa, 24, 451--461 (1997)

 \bibitem{BKR01}
 Bogachev, V.I., Krylov, N.V., R\"ockner, M.:
  On regularity of  transition probabilities and
invariant measures of singular diffusions under minimal conditions.
Comm. Partial Differ. Equ., 26, 2037--2080 (2001)

\bibitem{BKR06}
 Bogachev, V.I.,   Krylov, N.V.,  R\"ockner, M.:
Elliptic equations for measures: regularity and global bounds of
densities.  J. Math. Pures Appl., 85, 743--757 (2006)

\bibitem{BKR09}
Bogachev, V.I., Krylov, N.V., R\"ockner, M.:
Elliptic and parabolic equations for measures.
Uspehi Matem. Nauk, 64(6), 5--116 (2009) (in Russian);
English transl.:
Russian Math. Surveys, 64 (6), 973--1078 (2009)

\bibitem{BKRS}
Bogachev, V.I., Krylov, N.V., R\"ockner, M., Shaposhnikov, S.V. Fokker--Planck--Kolmogorov equations.
Amer. Math. Soc., Rhode Island, Providence (2015)

\bibitem{BR95}
Bogachev, V.I., R\"ockner, M.:
Regularity of invariant measures on
finite and infinite dimensional spaces and applications.
J.~Funct. Anal., 133, 168--223 (1995)

\bibitem{BRW}
Bogachev, V.I., R\"ockner, M., Wang, F.-Y.:
Elliptic equations for invariant measures on finite and infinite
dimensional manifolds. J. Math. Pures Appl., 80, 177--221 (2001)

\bibitem{BS17}
Bogachev, V.I., Shaposhnikov, S.V.:
Integrability and continuity of solutions to double divergence form equations.
Annali Matem. Pura Appl.,  196 (2017)

\bibitem{BB}
Bourgain, J.,  Brezis, H.:
New estimates for elliptic equations and Hodge type systems. J. Eur. Math. Soc.,  9, 277--315 (2007)

\bibitem{BrVS}
Brezis, H., Van Schaftingen, J.:
 Boundary estimates for elliptic systems with $L^1$-data. Calc.
Var. Partial Differ. Equ., 30, 369--388 (2007)

\bibitem{Chung}
Chung, L.O.:
Existence of harmonic $L^1$ functions in complete Riemannian
manifolds. Proc. Amer. Math. Soc.,  88, 531--532 (1983)

\bibitem{CFM}
 Conti, S.,  Faraco, D.,  Maggi, F.:
 A new approach to counterexamples to $L^1$ estimates: Korn's inequality, geometric
rigidity, and regularity for gradients of separately convex functions.
Arch. Rational Mech. Anal., 175, 287--300 (2005)

\bibitem{DFT}
Dacorogna, B., Fusco, N., Tartar, L.: On the solvability of the equation ${\rm div}\, u = f$
in $L^1$ and in $C^0$. Atti Accad. Naz. Lincei Cl. Sci. Fis. Mat. Natur. Rend. Lincei (9) Mat.
Appl., 14, 239--245 (2003)

\bibitem{DNPV}
Di Nezza, E.,  Palatucci, G.,  Valdinoci, E.:
Hitchhiker's guide to the fractional Sobolev spaces. Bull. Sci. Math., 229, 521--573 (2012)

\bibitem{DF}
Dyda, B., Frank, R.L.:
 Fractional Hardy--Sobolev--Maz'ya inequality for domains. Studia Math., 208 (2), 151--166 (2012)

\bibitem{FM}
Fontana, L., Morpurgo, C.:
Optimal limiting embeddings for $\Delta$-reduced Sobolev spaces in $L^1$.
Ann. Inst. H.~Poincar\'e. Anal. Non Lin., 31, 217--230 (2014)

\bibitem{GiaM}
Giaquinta, M., Martinazzi, L.:
An introduction to the regularity theory for elliptic systems, harmonic maps and minimal graphs.
2nd ed. Scuola Normale Superiore Pisa, Pisa (2012)

\bibitem{Grig}
Grigoryan, A.A.:
Stochastically complete manifolds and summable harmonic functions.
Izv. Akad. Nauk SSSR. Ser. Mat., 52 (5), 1102--1108 (1988) (in Russian);
English transl.: Math. USSR Izvestiya,  33, 425--432 (1989)

\bibitem{HT}
Hounie, J.,  Picon, T.:
$L^1$ Sobolev estimates for (pseudo)-differential operators and
applications. Math. Nachr.,  289 (14-15), 1838--1854 (2016)

\bibitem{LiSch}
Li, P., Schoen, R.:
$L\sp p$ and mean value properties of subharmonic functions on Riemannian manifolds.
 Acta Math.,  153 (3-4), 279--301 (1984)

\bibitem{LM}
Lunardi, A., Metafune, G.:
On the domains of elliptic operators in $L^1$.
    Differ. Integral Equ.,  17 (1-2), 73--97 (2004)

\bibitem{Maz}
Maz'ya, V.:
Estimates for differential operators of
vector analysis involving $L^1$-norm.
J. Eur. Math. Soc.,  12, 221--240 (2010)

\bibitem{MPR}
Metafune, G., Pallara, D., Rhandi, A.:
Global properties of invariant
measures. J.~Funct. Anal.,  223, 396--424 (2005)

\bibitem{Orn}
Ornstein, D.:
A non-inequality for differential operators in the $L_1$ norm.
Arch. Rational Mech. Anal., 11, 40--49 (1962)

\bibitem{PRT}
Parini, E.,  Ruf, B.,  Tarsi, C.:
Higher-order functional inequalities related to the clamped $1$-biharmonic operator.
Annali Matem. Pura Appl.,  194, 1835--1858 (2015)

\bibitem{Stein}
Stein, E.:
Singular integrals and differentiability properties of functions.
Princeton Univ. Press, Princeton (1970)

\bibitem{VS1}
Van Schaftingen, J.: A simple proof of an inequality of Bourgain, Brezis and Mironescu.
C. R. Math. Acad. Sci. Paris, 338, 23--26 (2004)

\bibitem{VS2}
Van Schaftingen, J.:
 Estimates for $L^1$ vector fields. C. R. Math. Acad. Sci. Paris, 339, 181--186 (2004)

\bibitem{VS3}
Van Schaftingen, J.:
 Estimates for $L^1$ vector fields with a second order condition. Acad.
Roy. Belg. Bull. Cl. Sci.,  15,  103--112 (2004)

\bibitem{VS4}
 Van Schaftingen, J.:
 Estimates for $L^1$ vector fields under higher-order differential conditions.
J. Eur. Math. Soc., 10, 867--882 (2008)

\end{thebibliography}
\end{document}